\documentclass[a4paper,10pt]{amsart}

  \usepackage{amsmath}		   			                
  \usepackage{amsthm}					                
  \usepackage{amssymb}  				    	        
  \usepackage{mathrsfs}					                
  \usepackage[all]{xy}  	  		                    
  \usepackage{fancyhdr}                             	
  \usepackage{cases}  
  \usepackage{hyperref}
  
    \theoremstyle{remark}         \newtheorem{remark}{Remark}[section]
    \theoremstyle{remark} 		\newtheorem{example}{Example}[section]
    \theoremstyle{definition}		\newtheorem{definition}{Definition}[section]
    \theoremstyle{plain}			\newtheorem{theorem}{Theorem}[section]
    \theoremstyle{plain}			\newtheorem{lemma}[theorem]{Lemma}
    \theoremstyle{plain}			\newtheorem{corollary}[theorem]{Corollary}
    \theoremstyle{plain}			\newtheorem{proposition}[theorem]{Proposition}


\allowdisplaybreaks
\numberwithin{equation}{section}

\newcommand{\difl}{{}^\si\!\diffd}

\newcommand{\sbu}{{\scriptscriptstyle\bullet}}

\newcommand{\JJ}{\mbf{J}}
\newcommand{\cone}{\mathsf{cone}}

\newcommand{\q}{\mathrm{q}}
\newcommand{\tw}{\mathsf{tw}}


\DeclareMathOperator{\im}{im}				        	    
\DeclareMathOperator{\Hom}{Hom}				            
\DeclareMathOperator{\Ext}{Ext}				            
\DeclareMathOperator{\Aut}{Aut}				            
\DeclareMathOperator{\id}{id}				                
\DeclareMathOperator{\cha}{char}                          

\DeclareMathOperator{\gr}{gr}

\newcommand{\varphantom}[1]{\mathrel{\phantom{#1}}}
\newcommand{\pl}{\partial}

\newcommand{\ot}{\otimes}
\newcommand{\diffd}{\mathsf{d}}			             	

  \newcommand{\al}{\alpha}

  \newcommand{\si}{\sigma}

  \newcommand{\de}{\delta}
  \newcommand{\De}{\Delta}
  \newcommand{\lam}{\lambda}
  \newcommand{\Lam}{\Lambda}
  \newcommand{\vphi}{\varphi}

  
  \newcommand{\mbb}[1]{\mathbb{#1}}			    	
  \newcommand{\mbf}[1]{\mathbf{#1}}			    	
  \newcommand{\mc}[1]{\mathcal{#1}}			    	
  \newcommand{\mf}[1]{\mathfrak{#1}}				
  \newcommand{\kk}{\Bbbk} 			    	        
  \newcommand{\nan}{\mathbb{N}}			    	    
  \newcommand{\inn}{\mathbb{Z}} 				    

\begin{document}
	
	\title[Nakayama automorphisms of Ore extensions]{Nakayama automorphisms of Ore extensions over polynomial algebras}
	
	\author[Liyu Liu]{Liyu Liu}
	\address{School of Mathematical Science, Yangzhou University, No.\ 180 Siwangting Road, 225002 Yangzhou, Jiangsu, China}
	\email{lyliu@yzu.edu.cn}
	
	\author[Wen Ma]{Wen Ma}
	\address{School of Mathematical Science, Yangzhou University, No.\ 180 Siwangting Road, 225002 Yangzhou, Jiangsu, China}
	\email{2922117517@qq.com}

\begin{abstract}
	Nakayama automorphisms play an important role in several mathematical branches, which are known to be tough to compute in general. We compute the Nakayama automorphism $\nu$ of any Ore extension $R[x;\si,\de]$ over a polynomial algebra $R$ in $n$ variables for an arbitrary $n$. The formula of $\nu$ is obtained explicitly. When $\si$ is not the identity map, the invariant $E^G$ is also investigated in term of Zhang's twist, where $G$ is a cyclic group sharing the same order with $\si$.
\end{abstract}
\keywords{Ore extension, twisted Calabi--Yau algebra, Nakayama automorphism}
\subjclass[2010]{Primary 16E40, 16S36, 16W22}
\maketitle

\section{Introduction}

In the past three decades, a great deal of research appears on Artin--Schelter regular algebras arising from noncommutative projective algebraic geometry and on noetherian Hopf algebras. Associated to these algebras, there are automorphisms that play an important role in studying and/or classifying them. They are related with the study of rigid dualizing complexes, Hopf algebra actions, noncommutative invariant theory, Zariski cancellation, and so forth. Such automorphisms are nowadays called Nakayama automorphisms in the literature. Examples of algebras that have Nakayama automorphisms are: noetherian Artin--Schelter Gorenstein algebras, many noetherian Hopf algebras, some (co)invariant subalgebras of Artin--Schelter regular algebras under Hopf algebra actions, Poincar\'e--Birkhoff--Witt deformations of some graded algebras.  In general, Nakayama automorphisms are known to be tough to compute. We refer to \cite{Bell-Zhang:Zariski-cancellation}, \cite{Brown-Gilmartin:qhs-connectedHopf}, \cite{Brown-Zhang:AS-Gorenstein-Hopf}, \cite{Krahmer:qh-space}, \cite{Lv-Mao-Zhang:NAK}, \cite{Lv-Mao-Zhang:Nak-graded}, \cite{Reyes-Rogalski-Zhang:skew-CY-homo-identity}, \cite{Shen-Lu:Nak-PBW}, \cite{Yekutieli:dual-comp}, \cite{Yekutieli:rigid-dualizing-complex-universal-enveloping-algebra} and the references therein for the progress on this topic during the past years. 

Nakayama automorphisms, which turn out to be unique up to inner for an algebra, are related with twisted Calabi--Yau algebras (or skew Calabi--Yau algebras); in particular, a homologically smooth algebra whose Nakayama automorphism is inner is Calabi--Yau in the sense of Ginzburg \cite{Ginzburg:CY-alg}. Calabi--Yau algebra is an algebraic structure arising from the geometry of Calabi--Yau manifolds and homological mirror symmetry. It has attracted much interest in recent years.

On top of the examples we mentioned above, one can construct new algebras belonging to the class via a given algebra having a Nakayama automorphism; for instance, by Ore extensions. In order to study right coideal subalgebras of quantized enveloping algebras and 5-dimensional Artin--Schelter regular algebras, Wang, Wu and the first author proved that if $A$ is twisted Calabi--Yau with Nakayama automorphism $\nu_A$ then the Ore extension $E=A[x;\si,\de]$ has Nakayama automorphism $\nu_E$ such that $\nu_E|_A=\si^{-1}\nu_A$ and $\nu_E(x)=\lam x+b$ for an invertible $\lam \in A$ and $b\in A$ \cite{L-Wu-Wang:twisted-CY-Ore-extension}. The only drawback is that $\lam$, $b$ are not completely determined. This result was improved by several professionals in different contexts, cf \cite{Goodman-Krahmer:untwist-cy}, \cite{He-Oystaeyen-Zhang:Koz-AS-reg}, \cite{Zhu-Oystaeyen-Zhang:double-ore-extension}, etc. Among them, Zhu, Van Oystaeyen and Zhang showed that $\lam$ is equal to the homological determinant $\mathrm{hdet}(\si)$ of $\si$, introduced by J\o{}rgensen and Zhang \cite{Jorgensen-Zhang:gorenstein}, for any trimmed graded Ore extension $A[x;\si]$ over a Koszul Artin--Schelter regular algebra $A$. Notice that there are numerous automorphisms that are not graded for a specific graded algebra, even for a polynomial algebra. Besides, the parameter $b$ is still unknown for non-trimmed Ore extensions.

In this paper, we partially make up for the deficiency of the result in \cite{Zhu-Oystaeyen-Zhang:double-ore-extension}. We focus on Ore extensions $R[x;\si,\de]$ where $R$ is the polynomial algebra $\kk[z_1,\dots,z_n]$, but we do not impose any restrictive conditions about the automorphism $\si$ and the $\si$-derivation $\de$. By regarding $R$ as a noncommutative algebra, we introduce in \S\ref{sec:preliminaries} a noncommutative version of partial derivation $\De_p$ with respect to each variable $z_p$ ($1\leq p\leq n$). In terms of $\De_p$, there is a determinant $\JJ$ which is analogous to the classical Jacobian determinant. We call $\JJ$ the noncommutative Jacobian determinant, and discuss relations between it and the classical one. Following the idea in \cite{L-Wu-Wang:twisted-CY-Ore-extension}, we construct a bounded resolution of $E=R[x;\si,\de]$ by finitely generated free left $E^e$-modules in \S\ref{sec:free-resol}. The differentials of this resolution are expressed explicitly, by using $\De_p$ and $\JJ$.

In \S\ref{sec:naka-auto}, we compute the Nakayama automorphism $\nu$ of $E$, completely determining the two parameters $\lam$ and $b$. We capture $\lam$ and $b$ in several different cases. The results are summarized as follows (Theorems \ref{thm:main-thm-diff}, \ref{thm:main-thm-general}, and  Corollary \ref{cor:CY-nece-suff}):

\begin{theorem}
	Let $E=R[x;\si,\de]$ be an Ore extension, and $J$ the Jacobian determinant of $\si$. Then the Nakayama automorphism $\nu$ satisfies $\nu|_R=\si^{-1}$ and
	\begin{enumerate}
		\item $\nu(x)=x+\nabla\cdot\mc{X}_\de$ if $\si=\id$, where $\mc{X}_\de$ is a vector field determined by $\de$ and $\nabla\cdot\mc{X}_\de$ stands for the divergence of $\mc{X}_\de$.
		\item $\nu(x)=Jx+J\kappa-\si_\q^{-1}(\kappa)$ if $\si\neq \id$, where $\kappa$ is in the quotient field of $R$ and $\si_\q$ is an extension of $\si$. Both $\kappa$ and $\si_\q$ are uniquely determined.
	\end{enumerate}
	As a consequence, $E$ is Calabi--Yau if and only if $\si=\id$ and $\nabla\cdot\mc{X}_\de=0$.
\end{theorem}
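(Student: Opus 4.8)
The plan is to read off $\nu$ from the top self-extension group of $E$ as a bimodule, using the explicit free resolution built in \S\ref{sec:free-resol}. Since that resolution exhibits $E$ via a length-$(n+1)$ resolution by finitely generated free $E^e$-modules, $E$ is homologically smooth; and because $R=\kk[z_1,\dots,z_n]$ has global dimension $n$ while the Ore variable raises it by one, $E$ has global dimension $n+1$. Hence $E$ is twisted Calabi--Yau of dimension $n+1$, and its Nakayama automorphism is characterised by
\begin{equation*}
\Ext^{i}_{E^e}(E,E^e)\cong\begin{cases} E^{\nu}, & i=n+1,\\ 0,& i\neq n+1,\end{cases}
\end{equation*}
where $E^{\nu}$ denotes $E$ carrying its standard left action and its right action twisted through $\nu$. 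First I would apply $\Hom_{E^e}(-,E^e)$ to the deleted resolution. As each term is free this yields a cochain complex of free $E^e$-modules whose differentials are the transposes of those in \S\ref{sec:free-resol}, so the problem reduces to computing the cokernel of the top transpose map $P_n^{\vee}\to P_{n+1}^{\vee}$ together with its residual bimodule structure.

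Next I would identify that cokernel with $E$ equipped with a twisted right action, and extract $\nu$ by comparing how the left and right structures act on the distinguished generator. The entries of the transposed differentials are governed by the noncommutative partial derivations $\De_p$ and the noncommutative Jacobian $\JJ$ of \S\ref{sec:preliminaries}; the Jacobian is precisely what records the failure of the right $R$-action to match the left one. Tracking the $R$-part of the top map shows that right multiplication by $z_p$ is converted into left multiplication by $\si^{-1}(z_p)$, yielding $\nu|_R=\si^{-1}$ independently of $\de$. This step is the bimodule incarnation of the Koszul self-duality of the polynomial algebra, twisted by the Ore automorphism.

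It remains to determine $\nu(x)$, and here the two cases are governed by the value of the Jacobian determinant $J$. When $\si=\id$ one has $J=1$, the transposed top differential contributes only the derivation data, and the coefficient collected from the $\De_p$-entries is exactly the divergence $\nabla\cdot\mc{X}_\de=\sum_p\pl_p\de(z_p)$ of the vector field attached to $\de$, giving $\nu(x)=x+\nabla\cdot\mc{X}_\de$. When $\si\neq\id$ the Jacobian enters multiplicatively, producing the leading term $Jx$, while the derivation forces a correction solving a cohomological equation of the shape $J\kappa-\si_\q^{-1}(\kappa)=(\text{datum of }\de)$ inside the quotient field $Q(R)=\kk(z_1,\dots,z_n)$, with $\si_\q$ the unique extension of $\si$ to $Q(R)$. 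Since $\si\neq\id$ the operator $\kappa\mapsto J\kappa-\si_\q^{-1}(\kappa)$ is injective on $Q(R)$, so $\kappa$ exists and is unique; the main technical point I expect to fight is verifying that the combination $J\kappa-\si_\q^{-1}(\kappa)$ lands back in $R$ even though $\kappa$ itself need not, so that $\nu(x)=Jx+J\kappa-\si_\q^{-1}(\kappa)$ is a genuine element of $E$.

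Finally, for the Calabi--Yau criterion I would use that the homologically smooth algebra $E$ is Calabi--Yau precisely when $\nu$ is inner. The associated graded of $E$ for the $x$-filtration is a (skew) polynomial algebra, hence a domain whose only units are the nonzero scalars; therefore the units of $E$ are scalars and every inner automorphism is trivial. Thus $E$ is Calabi--Yau if and only if $\nu=\id$. If $\si\neq\id$ then $\nu|_R=\si^{-1}\neq\id$, so $\nu\neq\id$; if $\si=\id$ then $\nu|_R=\id$ while $\nu(x)=x+\nabla\cdot\mc{X}_\de$, whence $\nu=\id$ exactly when $\nabla\cdot\mc{X}_\de=0$. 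Combining the two cases gives the stated equivalence.
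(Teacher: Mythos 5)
Your strategy works for the two cases in which the paper's resolution actually exists, and there it matches the paper: the trimmed computation gives $\nu|_R=\si^{-1}$ and $\nu(x)=Jx$, the differential one gives $\nu(x)=x+\nabla\cdot\mc{X}_\de$, and your Calabi--Yau argument (the units of $E$ are scalars, so inner automorphisms are trivial) is a correct fleshing-out of Corollary \ref{cor:CY-nece-suff}. The genuine gap is in the mixed case $\si\neq\id$, $\de\neq 0$. The liftings of Propositions \ref{prop:lifting-trimmed} and \ref{prop:lifting-diff} are constructed only for $\de=0$ and for $\si=\id$ respectively; neither the paper nor your proposal produces a free resolution of $E$ for a general Ore extension, so there is no ``transposed top differential'' from which you could read off the leading term $Jx$ or the correction term in this case. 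The paper circumvents this entirely: by Theorem \ref{thm:Ore-LWW} the coefficient $\lam$ in $\nu(x)=\lam x+b$ depends only on $\si$, so the trimmed computation already forces $\lam=J$ for arbitrary $\de$; the remaining parameter $b$ is then pinned down purely algebraically by applying $\nu$ to the Ore relation $xf=\si(f)x+\de(f)$ for some $f$ with $\si(f)\neq f$.

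Second, your existence-and-uniqueness argument for $\kappa$ fails. The operator $\kappa\mapsto J\kappa-\si_\q^{-1}(\kappa)$ is \emph{not} injective on the quotient field: take $\si(z_1)=qz_1$ and $\si(z_i)=z_i$ for $i\geq 2$, so $J=q$; then $\kappa=z_1^{-1}$ satisfies $\si_\q^{-1}(\kappa)=q\kappa=J\kappa$, hence lies in the kernel. (This is precisely the first example in \S\ref{sec:inv}, where $\de\neq 0$ and yet $\nu(x)=Jx$.) So injectivity cannot deliver uniqueness of $\kappa$, and it would not deliver existence even if it held. The paper's $\kappa$ is not the solution of a cohomological equation at all; it is produced by Lemma \ref{lem:skew-derivation}: since $R$ is a commutative domain, comparing the Leibniz expansions of $\de(fg)$ and $\de(gf)$ gives $\de(f)(\si(g)-g)=\de(g)(\si(f)-f)$, so the ratio $\kappa=\de(f)/(\si(f)-f)$ is independent of the choice of $f$ with $\si(f)\neq f$, and this element --- characterized by $\de(h)=\kappa(\si(h)-h)$ for all $h\in R$ --- is the $\kappa$ appearing in the formula. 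With that definition, $b=J\kappa-\si_\q^{-1}(\kappa)$ is the \emph{output} of solving $J\de\si^{-1}(f)+b\si^{-1}(f)=fb+\si^{-1}\de(f)$, not a defining equation for $\kappa$; and your worry about whether $J\kappa-\si_\q^{-1}(\kappa)$ lies in $R$ disappears, because $b\in R$ is guaranteed in advance by Theorem \ref{thm:Ore-LWW}.
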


Suppose $\si\neq \id$, and let $G$ be a cyclic group of order equal to that of $\si$ (also equal to that of $\nu$). There is a $G$-action on $E$ naturally. We investigate the invariant $E^G$ in \S\ref{sec:inv}. In this section, we first extend $\si$, $\de$ to the quotient field $R_\q$ of $R$. The extended automorphism is denoted by $\si_\q$, and the extended $\si_\q$-derivation is denoted by $\de_\q$, respectively. A larger Ore extension $E_\q=R_\q[x; \si_\q, \de_\q]$ is hence obtained. Next we show that the invariant $E_\q^G$ is isomorphic to the Zhang's twist $\tw(\widehat{R_\q}, \widehat{\si_\q})$ of a graded algebra $\widehat{R_\q}$ via a graded automorphism $\widehat{\si_\q}$. Later on, we discuss the subalgebra $E^G$ of $E_\q^G$ in two cases. The case $\kappa\in R$ is very easy, under which we deduce that $E^G$ is isomorphic to the Zhang's twist $\tw(\widehat{R}, \widehat{\si})$; the other case $\kappa\notin R$ is quite subtle because sometimes $E^G$ carries a graded algebra structure in a natural way but sometimes not. If $E^G$ is not graded, we equip it with a filtration, whose associated graded algebra $\gr E^G$ is proven to be a graded subalgebra of $\tw(\widehat{R}, \widehat{\si})$. The graded subalgebra may happen to be $\tw(\widehat{R}, \widehat{\si})$ itself. The results are summarized as follows (Theorem \ref{thm:gr-invariant}):

\begin{theorem}
	Let $E=R[x;\si,\de]$ be an Ore extension with $\si\neq\id$. Then
	\begin{enumerate}
		\item $\gr E^G$ is isomorphic to a graded subalgebra of $\tw(\widehat{R}, \widehat{\si})$,
		\item $\gr E^G$ is isomorphic to $\tw(\widehat{R}, \widehat{\si})$ if either $\nu(x)=Jx$ or $\si$ is of finite order $r$ with $\cha\kk\nmid r$.
	\end{enumerate}
\end{theorem}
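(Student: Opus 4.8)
The plan is to realise $E^G$ inside the ambient invariant ring $E_\q^G$, for which we already have the graded identification $E_\q^G\cong\tw(\widehat{R_\q},\widehat{\si_\q})$, and then to read off $\gr E^G$ through leading terms. Write $y=x+\kappa\in E_\q$ for the adjusted variable, so that $\nu(y)=Jy$ by the main theorem. Since $J\in\kk^\times$ is central and $\nu|_{R_\q}=\si_\q^{-1}$, a direct computation gives $\nu\bigl(\sum_i a_iy^i\bigr)=\sum_i J^i\si_\q^{-1}(a_i)y^i$, whence
\[
E_\q^G=\bigoplus_{i\ge0}(R_\q)_i\,y^i,\qquad (R_\q)_i=\{a\in R_\q:\si_\q(a)=J^ia\},
\]
and the grading of $\tw(\widehat{R_\q},\widehat{\si_\q})$ is exactly the one by $y$-degree. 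Inside it, $\tw(\widehat R,\widehat\si)=\bigoplus_i R_i\,y^i$, with $R_i=R\cap(R_\q)_i$, is a graded subalgebra, because each $R_i$ is closed under the product of $R$ and $\widehat\si=\si|_{\widehat R}$ preserves the grading.

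For (1) I would filter $E^G$ by $y$-degree, which coincides with the $x$-degree since $x=y-\kappa$ and $\kappa$ has $y$-degree $0$; explicitly $F_kE^G=E^G\cap\bigoplus_{i\le k}(R_\q)_iy^i$. This is a ring filtration, and as it is induced from the grading filtration of $E_\q^G$, the induced map $\gr E^G\to\gr E_\q^G\cong\tw(\widehat{R_\q},\widehat{\si_\q})$ is an injective homomorphism of graded algebras; since $E_\q$ is a domain it carries the class of a $y$-degree-$k$ invariant $f$ to (leading coefficient of $f$)$\,\cdot y^k$. It then remains to see this leading coefficient lies in $R_k$: writing $f=\sum_{i\le k}a_ix^i\in E$ with $a_i\in R$ and substituting $x=y-\kappa$, the top $y$-term is $a_ky^k$, so the leading coefficient equals $a_k\in R$; on the other hand $f\in E_\q^G$ forces it into $(R_\q)_k$. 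Hence it lies in $R\cap(R_\q)_k=R_k$, and $\gr E^G$ embeds as a graded subalgebra of $\tw(\widehat R,\widehat\si)$.

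For (2) the point is surjectivity of this embedding, i.e. that every $a\in R_k$ occurs as a leading coefficient. If $\nu(x)=Jx$ then for $a\in R_k$ one computes $\nu(ax^k)=J^k\si^{-1}(a)x^k=ax^k$, so $ax^k\in E^G$ already has leading coefficient $a$, and the embedding is onto $\bigoplus_kR_ky^k=\tw(\widehat R,\widehat\si)$. If instead $\si$ has finite order $r$ with $\cha\kk\nmid r$, then $G=\langle\nu\rangle$ has order $r$ invertible in $\kk$, and the Reynolds operator $\pi=\tfrac1r\sum_{j=0}^{r-1}\nu^j\colon E\to E^G$ is available. Writing $\nu^j(x)=J^jx+c_j$ with $c_j\in R$ and using $\nu^j(a)=\si^{-j}(a)=J^{-jk}a$ for $a\in R_k$, the $x$-degree-$k$ part of $\nu^j(ax^k)$ equals $J^{-jk}a\cdot J^{jk}x^k=ax^k$ for every $j$; averaging, $\pi(ax^k)\in E^G$ still has leading coefficient $a$. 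Hence again every $a\in R_k$ is hit and the embedding of (1) becomes an isomorphism onto $\tw(\widehat R,\widehat\si)$.

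The main obstacle is precisely this surjectivity: in general the leading coefficient of an invariant of $x$-degree $k$ need not range over all of $R_k$, which is exactly why $\gr E^G$ may be only a proper graded subalgebra. The two hypotheses are what let us lift a homogeneous eigenvector $a\in R_k$ to a genuine invariant of $E$ without lowering its leading term — trivially when $x$ itself is a $\nu$-eigenvector, and via the averaging projection when $|G|$ is invertible in $\kk$. The remaining work is bookkeeping: verifying that the $x$-degree and $y$-degree filtrations agree on $E^G$, that the induced map on associated gradeds is an algebra homomorphism (using that $E_\q$ is a domain, so leading terms multiply), and that the grading transported from $\tw(\widehat{R_\q},\widehat{\si_\q})$ is the $y$-degree grading, so that $\tw(\widehat R,\widehat\si)$ is visibly the graded subalgebra cut out by requiring coefficients to lie in $R$.
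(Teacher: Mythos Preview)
Your argument is correct, and for part (1) as well as the case $\nu(x)=Jx$ of part (2) it coincides with the paper's proof. The genuine difference lies in the finite-order case of (2): the paper argues via the eigenspace decompositions $R=\bigoplus_{j}\Lambda^\si_{\zeta^j}(R)$ and $E=\bigoplus_{j}\Lambda^\nu_{\zeta^j}(E)$ for a primitive $r$th root of unity $\zeta$, writes $g_ix^i=\sum_j p_j$ with $p_j\in\Lambda^\nu_{\zeta^j}(E)$, and uses a degree-contradiction to show that the invariant component $p_0$ has leading coefficient $g_i$. Your route via the Reynolds operator $\pi=\frac{1}{r}\sum_j\nu^j$ is more direct: the single observation that the top $x$-coefficient of $\nu^j(ax^k)$ is $J^{-jk}a\cdot J^{jk}=a$ immediately gives an invariant with the prescribed leading term. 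Besides being shorter, your argument uses only that $r$ is invertible in $\kk$, whereas the paper's decomposition tacitly needs a primitive $r$th root of unity in $\kk$ for $\si$ and $\nu$ to be diagonalizable over $\kk$; thus your version is slightly more robust with respect to the ground field.
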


Throughout this paper, $\kk$ is a field and all algebras are over $\kk$ unless stated otherwise. Unadorned $\ot$ means $\ot_{\kk}$. 

\section{Preliminaries}\label{sec:preliminaries}
Let $A$ be an algebra and $M$ an $A$-bimodule. The group of algebra automorphisms of $A$ is denoted by $\Aut(A)$. For any $\si\in \Aut(A)$, denote by ${}^\si M$ (resp.\ $M^\si$) the left $A$-module (resp.\ right $A$-module) whose ground $\kk$-module is the same with $M$ and whose left (resp.\ right) $A$-action is twisted by $\si$, that is, $a\triangleright  m= \si(a)m$ (resp.\ $m\triangleleft a=m\si(a)$) for any $a\in A$, $m\in M$. 

Let $A^\mathrm{op}$ be the opposite algebra of $A$, and $A^e = A\ot A^\mathrm{op}$ the enveloping algebra of $A$. We will often identify $A$-bimodules with left or right $A^e$-modules, for convenience. Recall that $A$ is \textit{homologically smooth} if $A$ as a left (or equivalently, right) $A^e$-module, admits a finitely generated projective resolution of finite length.

\begin{definition}
	An algebra $A$ is called $\nu$-\textit{twisted Calabi--Yau} of dimension $d$ for $\nu\in\Aut(A)$ and $d\in\nan$ if
	\begin{enumerate}
		\item $A$ is homologically smooth,
		\item there are isomorphisms of $A$-bimodules
		\[
		\Ext^i_{A^e}(A,A^e)\cong \begin{cases}
		A^\nu, & i=d, \\ 0, & i\neq d,
		\end{cases}
		\]
		in which the regular left module structure on $A^e$ is used for computing the $\Ext$-group, and the right one induces the $A$-bimodule structure on the $\Ext$-group.
	\end{enumerate}
\end{definition}

Let $R$ be an algebra. Given an endomorphism $\si$ on $R$ and a $\si$-derivation $\de$, one obtains an extension $E=R[x;\si,\de]$ over $R$ which is called an Ore extension. We assume that the reader is familiar with this topic. For details we refer to \cite{McConnell-Robson:noncomm-ring}. Since $E$ will inherit nice properties from $R$ if $\si$ is an automorphism, we require in this paper that $\si$ is always an automorphism when Ore extensions are mentioned. Customarily, an Ore extension is called \textit{trimmed} if $\de=0$, called \textit{differential} if $\si=\id$. In both cases, we write $R[x;\si]$ and $R[x;\de]$ respectively.

\begin{lemma}\label{lem:ses-ore}
	Let $E=R[x;\si,\de]$ be an Ore extension. Then there is a short exact sequence
	\begin{equation*}
	0\xrightarrow{\quad} E^\si\ot_RE \xrightarrow[\quad]{\bar{\rho}} E\ot_RE \xrightarrow{\quad} E \xrightarrow{\quad}  0
	\end{equation*}
	of $E$-bimodules where $\bar{\rho}(1\ot 1)=x\ot 1-1\ot x$.
\end{lemma}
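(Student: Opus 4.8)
The plan is to realise the sequence as the kernel--cokernel sequence of the multiplication map and to identify the kernel explicitly. Write $\mu\colon E\ot_R E\To E$ for the $E$-bimodule map $\mu(e\ot f)=ef$; it is surjective since $\mu(1\ot 1)=1$. The whole statement then reduces to producing an $E$-bimodule isomorphism from $E^\si\ot_R E$ onto $\ker\mu$ whose composite with the inclusion into $E\ot_R E$ is $\bar{\rho}$. Before doing so I would record the module structure that makes everything computable: because $\si$ is an automorphism, $E$ is free both as a left and as a right $R$-module on the powers $\{x^j\}_{j\ge 0}$, and a short bookkeeping shows that both $E\ot_R E$ and $E^\si\ot_R E$ are free as \emph{left} $E$-modules on the family $u_j:=1\ot x^j$ ($j\ge 0$); the $\si$-twist on the right $R$-action of $E^\si$ is absorbed when one slides coefficients across the tensor.

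Next I would exhibit $\bar{\rho}$ by the closed formula $\bar{\rho}(e\ot f)=ex\ot f-e\ot xf$, which visibly gives $\bar{\rho}(1\ot 1)=x\ot 1-1\ot x$ and is manifestly left and right $E$-linear. The one genuine point is that $\bar{\rho}$ descends to the balanced tensor over $R$, i.e.\ that $\bar{\rho}(e\si(r)\ot f)=\bar{\rho}(e\ot rf)$ for $r\in R$. This is the crux of the lemma: expanding the right-hand side and pushing $r$ through the tensor forces the Ore relation $xr=\si(r)x+\de(r)$ into play, after which the two $\de(r)$-contributions cancel and the surviving terms are exactly $\bar{\rho}(e\si(r)\ot f)$. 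It is precisely the twist by $\si$ in $E^\si$ that matches the $\si(r)x$ part of the Ore relation, so well-definedness is not a formality but the place where the hypotheses are genuinely used. A one-line check gives $\mu\circ\bar{\rho}=0$, so $\im\bar{\rho}\subseteq\ker\mu$.

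It remains to prove exactness at the two middle spots, and here the free-basis description does all the work. On basis elements one computes $\bar{\rho}(u_j)=xu_j-u_{j+1}$ and $\mu(u_j)=x^j$. For injectivity, if $\sum_{j\le N}e_j u_j$ lies in $\ker\bar{\rho}$, then comparing coefficients against the free basis $\{u_k\}$ gives $e_N=0$ from the top term and then $e_{k-1}=e_k x$ descending from $k=N$, whence all $e_j=0$. For the remaining inclusion, the congruence $u_{j+1}\equiv xu_j\pmod{\im\bar{\rho}}$ lets me reduce any element to $\bigl(\sum_j e_j x^j\bigr)(1\ot 1)$ modulo $\im\bar{\rho}$; since $\mu\bigl(\sum_j e_j u_j\bigr)=\sum_j e_j x^j$, this residue vanishes exactly when the element lies in $\ker\mu$, giving $\ker\mu=\im\bar{\rho}$ (both inclusions at once). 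Together with surjectivity of $\mu$ this yields the short exact sequence. The main obstacle is the descent computation of the second paragraph; the rest is linear algebra over the free left $E$-module.
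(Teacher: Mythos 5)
Your proof is correct. Note that the paper itself gives no proof of this lemma at all---it is imported as a known fact from the cited work of Liu--Wang--Wu on twisted Calabi--Yau Ore extensions---so there is no internal argument to compare against; judged on its own, your argument is complete and is the standard one. The two load-bearing points are handled properly: (i) well-definedness of $\bar{\rho}$ on the balanced tensor product $E^\si\ot_R E$, where expanding $\bar{\rho}(e\ot rf)$ via the Ore relation $xr=\si(r)x+\de(r)$ makes the two $\de(r)$-terms cancel and leaves exactly $\bar{\rho}(e\si(r)\ot f)$, which is precisely where the $\si$-twist on the right $R$-action is needed; and (ii) the identification of both $E\ot_R E$ and $E^\si\ot_R E$ as free left $E$-modules on $\{1\ot x^j\}_{j\ge 0}$, which relies on $E$ being free as a left (and right) $R$-module on the powers of $x$---valid here because $\si$ is assumed to be an automorphism, a hypothesis the paper imposes on all its Ore extensions. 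Once that basis is in hand, your coefficient comparison ($e_N=0$, then $e_{k-1}=e_kx$ descending) gives injectivity, and the reduction $u_{j+1}\equiv xu_j \pmod{\im\bar{\rho}}$ collapses any element of $\ker\mu$ into $\im\bar{\rho}$, so exactness in the middle follows.
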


\begin{theorem}\cite[Thm.\ 2]{L-Wu-Wang:twisted-CY-Ore-extension}\label{thm:Ore-LWW}
	Let $E=R[x;\si,\de]$ be an Ore extension. If $R$ is $\nu_R$-twisted Calabi--Yau of dimension $d$, then $E$ is $\nu_E$-twisted Calabi--Yau of dimension $d+1$ where $\nu_E$ is determined by $\nu_E|_R=\si^{-1}\nu_R$ and $\nu_E(x)=\lam x+b$ with $\lam$, $b$ in $R$ and $\lam$ invertible. Moreover, $\lam$ depends only on $\si$, and $b$ depends on both $\si$ and $\de$.
\end{theorem}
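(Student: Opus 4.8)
The plan is to promote the short exact sequence of Lemma~\ref{lem:ses-ore} to a statement about $E^e$-projective resolutions and then dualize. Write $F=E\ot_R(-)\ot_R E$ for the base-change functor from $R$-bimodules to $E$-bimodules; it is left adjoint to restriction along $R^e\to E^e$, hence preserves projectives, and since $E$ is free as a left and as a right $R$-module it is also exact. Because $F(R\ot R)=E\ot E$, it carries finitely generated free $R^e$-modules to finitely generated free $E^e$-modules. Let $P_\bullet\to R$ be a finite-length resolution by finitely generated projective $R^e$-modules, which exists as $R$ is homologically smooth of dimension $d$. Then $F(P_\bullet)$ is a finite free $E^e$-resolution of $F(R)=E\ot_R E$, while applying $F$ to the resolution $P_\bullet^\si$ of the twisted bimodule $R^\si$ (twisting the right action by the automorphism $\si$ preserves projectivity) yields a finite free $E^e$-resolution of $F(R^\si)=E^\si\ot_R E$.

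Next I would lift $\bar{\rho}$ to a chain map $F(P_\bullet^\si)\to F(P_\bullet)$ covering the inclusion in Lemma~\ref{lem:ses-ore} and pass to its mapping cone. The short exact sequence shows that this cone is a resolution of $E$ by finitely generated free $E^e$-modules of length $d+1$, which proves that $E$ is homologically smooth of dimension at most $d+1$. It then remains to compute $\Ext^\bullet_{E^e}(E,E^e)$. Applying $\Hom_{E^e}(-,E^e)$ to $F(P_\bullet)$ and using the adjunction above together with the finiteness of $E$ over $R$ on each side, I expect $\Ext^i_{E^e}(E\ot_R E,E^e)\cong F(\Ext^i_{R^e}(R,R^e))$, which by the twisted Calabi--Yau hypothesis on $R$ equals $E\ot_R R^{\nu_R}\ot_R E$ in degree $d$ and vanishes otherwise; the analogous computation handles $E^\si\ot_R E$.

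Feeding these into the long exact $\Ext$-sequence attached to the short exact sequence of Lemma~\ref{lem:ses-ore}, all terms concentrate in degrees $d$ and $d+1$, so $\Ext^i_{E^e}(E,E^e)=0$ for $i\neq d,d+1$ and the whole computation reduces to a single map $\phi=\Ext^d_{E^e}(\bar{\rho},E^e)$ between the two degree-$d$ groups. The crux is to show $\phi$ is injective, forcing $\Ext^d_{E^e}(E,E^e)=0$, and to identify its cokernel as $E^{\nu_E}$; this pins the dimension at exactly $d+1$ and produces the Nakayama automorphism.

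The delicate point, and the main obstacle, is determining $E^{\nu_E}$ explicitly rather than abstractly. Tracking the left and right $E$-actions through $\Hom_{E^e}(-,E^e)$ shows that the $\nu_R$-twist coming from $R$ combines with the $\si$-twist introduced by the appearance of $E^\si$, composing to $\nu_E|_R=\si^{-1}\nu_R$. The value $\nu_E(x)$ is governed by $\phi$, equivalently by how the dualized generator $x\ot 1-1\ot x$ of $\bar{\rho}$ interacts with the relation $xr=\si(r)x+\de(r)$; this forces $\nu_E(x)=\lam x+b$ with $\lam\in R$ invertible. The leading coefficient $\lam$ is read off from the $\si$-twist alone, so it depends only on $\si$, whereas the correction term $b$ records the contribution of $\de$ through $\bar{\rho}$ and hence depends on both $\si$ and $\de$. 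Carrying out this bookkeeping with explicit bases of the free modules, and verifying the injectivity of $\phi$, is where the real work lies.
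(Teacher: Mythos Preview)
The paper does not prove this theorem: it is quoted verbatim from \cite[Thm.~2]{L-Wu-Wang:twisted-CY-Ore-extension} and used as a black box, so there is no proof here to compare your proposal against. What the present paper does instead is specialize the \emph{method} of that reference to the case $R=\kk[z_1,\dots,z_n]$: \S\ref{sec:free-resol} builds the mapping-cone resolution you describe explicitly (Corollary~\ref{cor:cone-resolu}), and \S\ref{sec:naka-auto} reads off $\nu_E$ from the top differential.

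That said, your outline is essentially the argument of \cite{L-Wu-Wang:twisted-CY-Ore-extension}. A few points deserve care if you intend to carry it out. First, the isomorphism you ``expect'', $\Ext^i_{E^e}(E\ot_R E,E^e)\cong E\ot_R\Ext^i_{R^e}(R,R^e)\ot_R E$, is not a formal adjunction statement; one uses that $E$ is free (not merely finite) over $R$ on each side, so that $\Hom_{E^e}(E\ot_R P\ot_R E,\,E\ot E)\cong E\ot_R\Hom_{R^e}(P,R\ot R)\ot_R E$ for finitely generated projective $P$, and then passes to cohomology. Second, the injectivity of $\phi$ is genuinely the crux: in the cited paper it is obtained by analyzing $\phi$ as a degree-one-in-$x$ map between free $E$-bimodules induced from $R^{\nu_R}$ and $R^{\si^{-1}\nu_R}$, and using that multiplication by $x$ has no kernel. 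Third, your last paragraph is honest about where the work lies, but note that the statement only asserts the \emph{form} $\nu_E(x)=\lam x+b$ and the dependence of $\lam$, $b$ on $\si$, $\de$; the explicit values are precisely what \cite{L-Wu-Wang:twisted-CY-Ore-extension} leaves open and what the present paper computes for polynomial $R$.
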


\begin{remark}\label{rmk:lambda-sigma}
	The parameters $\lam$ and $b$ have not been completely determined so far. Though, other professionals improve Theorem \ref{thm:Ore-LWW} a lot, mainly dealing with the first parameter $\lam$ in several contexts. As far as the author knows, the following two are of importance and of interest:
	\begin{enumerate}
		\item \cite{Goodman-Krahmer:untwist-cy} If $\si=\nu_R$ and $\de=0$, then $\nu_E(x)=x$. As a consequence, every twisted Calabi--Yau algebra admits a trimmed Ore extension that is Calabi--Yau.
		\item \cite{Zhu-Oystaeyen-Zhang:double-ore-extension} If $R$ is graded Koszul, $\si\in \operatorname{GrAut}(R)$, and $\de=0$, then $\nu(x)=\mathrm{hdet}(\si)x$.
	\end{enumerate}
\end{remark}

From now on, let $R=\kk[z_1,z_2,\ldots,z_n]$ be the polynomial algebra in $n$ variables. Denote $\si_i=\si(z_i)$ and $\de_i=\de(z_i)$. Then $\si$ is completely determined by the $n$-tuple $(\si_1,\ldots,\si_n)$, and respectively, $\de$ is determined by $(\de_1,\ldots,\de_n)$. 

For any polynomial $\vphi$ in $R$, we write $\diffd \vphi=1\ot \vphi-\vphi\ot 1$; it is called the noncommutative 1-form of $\vphi$ in noncommutative geometry. For each $1\leq p\leq n$, define the linear map $\De_p\colon R\to R\ot R$ by
\[
\De_p(z_1^{i_1}\cdots z_n^{i_n})=\sum_{s=1}^{i_p}z_p^{i_p-s}z_{p+1}^{i_{p+1}}\cdots z_n^{i_n}\ot z_1^{i_1}\cdots z_{p-1}^{i_{p-1}}z_p^{s-1}.
\]

\begin{lemma}\label{lem:noncomm-deriv}
	For any polynomial $\vphi$, one has $\diffd \vphi=\sum\limits_{p=1}^{n}\De_p(\vphi)\diffd z_p$.
\end{lemma}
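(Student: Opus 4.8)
The plan is to reduce to the case of a single monomial by linearity and then to collapse a doubly-nested telescoping sum. Since $\diffd$, each $\De_p$, and hence the whole right-hand side $\sum_{p=1}^n \De_p(\vphi)\diffd z_p$ are $\kk$-linear in $\vphi$, it suffices to check the identity on a basis monomial $m = z_1^{i_1}\cdots z_n^{i_n}$.

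First I would evaluate one summand $\De_p(m)\diffd z_p$ for fixed $p$. Recall that $R$ is commutative, so the multiplication of $R^e = R\ot R$ is componentwise; combining the definition of $\De_p$ with $\diffd z_p = 1\ot z_p - z_p\ot 1$, and abbreviating $A_{<p}=z_1^{i_1}\cdots z_{p-1}^{i_{p-1}}$ and $A_{>p}=z_{p+1}^{i_{p+1}}\cdots z_n^{i_n}$, the $s$-th term of $\De_p(m)$ contributes
\[
z_p^{i_p-s}A_{>p}\ot A_{<p}z_p^{s}-z_p^{i_p-s+1}A_{>p}\ot A_{<p}z_p^{s-1}.
\]
Summing over $1\le s\le i_p$ gives a telescoping sum whose interior cancels, leaving
\[
\De_p(m)\diffd z_p = A_{>p}\ot A_{<p}z_p^{i_p}-z_p^{i_p}A_{>p}\ot A_{<p}.
\]

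Next I would sum over $p$. Introducing the partial products $B_p=z_p^{i_p}\cdots z_n^{i_n}$ (with $B_{n+1}=1$ and $B_1=m$) and $C_p=z_1^{i_1}\cdots z_p^{i_p}$ (with $C_0=1$ and $C_n=m$), the previous display becomes $\De_p(m)\diffd z_p = B_{p+1}\ot C_p - B_p\ot C_{p-1}$, which again telescopes, this time in $p$:
\[
\sum_{p=1}^n \De_p(m)\diffd z_p = B_{n+1}\ot C_n - B_1\ot C_0 = 1\ot m - m\ot 1 = \diffd m.
\]
This proves the identity for $m$, and hence for all $\vphi$.

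The calculation is elementary, and the only genuine difficulty is organizational: two distinct indices drive two distinct telescoping collapses, and one must split each monomial at exactly the variable $z_p$ so that the inner cancellation (over $s$) produces boundary terms of the precise form $B_{p+1}\ot C_p$ and $B_p\ot C_{p-1}$ that the outer cancellation (over $p$) can absorb. Choosing $B_p$ and $C_p$ to match these boundary terms is what makes the two collapses dovetail into the clean answer $1\ot m-m\ot 1$; getting the index ranges and the empty-product conventions right is the one place where care is genuinely needed.
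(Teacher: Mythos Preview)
Your proof is correct; the paper's own proof is the single word ``Straightforward,'' and what you have written is exactly the telescoping computation that word is gesturing at. There is no alternative route to compare.
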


\begin{proof}
	Straightforward.
\end{proof}

If $\mu$ is the multiplication map of $R$, then we can check $\mu\De_p=\pl/\pl z_p$. Hence $\De_p$ is sometimes called the noncommutative partial derivation with respect to $z_p$, and Lemma \ref{lem:noncomm-deriv} is analogous to the total differential formula in calculus. 

We will introduce a determinant $\JJ$ in terms of $\De_p$ which are somewhat Jacobian-like. By definition, $\JJ=|\De_j(\si_i)|_{n\times n}$, and we called it the noncommutative determinant of $\si$. In fact, $\mu(\JJ)$ is equal to the common Jacobian determinant $J$ of $\si$. For any pair of sequences $(i_1,\ldots,i_p)$ and $(j_1,\ldots,j_p)$ in $\{1,2,\ldots,n\}$, define $\JJ_{j_1\ldots j_p}^{i_1\ldots i_p}$ to be the determinant of order $p$ whose $(u,v)$-entry is $\De_{j_v}(\si_{i_u})$. 


\section{Free resolutions}\label{sec:free-resol}
In this section, we follow the idea used in \cite{L-Wu-Wang:twisted-CY-Ore-extension} to construct a free resolution of $E$ as an $E^e$-module. 

Let $\mc{K}^R_{\sbu}$ be the Koszul resolution of $R$, i.e.,  $\mc{K}^R_p$ is free over $R\ot R$ with basis $\{\mf{e}_{i_1\ldots i_p}\mid 1\leq i _1<\cdots< i_p\leq n\}$. Then $\mc{K}_{\sbu}:=E\ot_R\mc{K}^R_{\sbu}\ot_RE$ is a left $E^e$-projective resolution of $E\ot_RE$ (here we identify $E$-bimodules with left $E^e$-modules) whose differentials are given by 
\[
d_p(\mf{e}_{i_1\ldots i_p})=\sum_{v=1}^p(-1)^{v}\diffd z_{i_v}\mf{e}_{i_1\ldots\widehat{i_v}\ldots i_p}.
\] 
Likewise, denoting by $\mf{e}'_{i_1\ldots i_p}$ a copy of $\mf{e}_{i_1\ldots i_p}$, we then have an $E^e$-projective resolution $\mc{K}'_{\sbu}$ of $E^\si\ot_RE$ whose differentials are given by 
\[
d_p'(\mf{e}'_{i_1\ldots i_p})=\sum_{v=1}^p(-1)^{v}\difl z_{i_v}\mf{e}'_{i_1\ldots\widehat{i_v}\ldots i_p}, 
\]
where $\difl z_i=1\ot z_i-\si_i\ot 1$. Let us lift $\bar{\rho}$ to a morphism $\rho\colon \mc{K}'_{\sbu}\to\mc{K}_{\sbu}$ of complexes for trimmed and differential Ore extensions respectively.

\begin{proposition}\label{prop:lifting-trimmed}
	Let $E=R[x;\si]$ be a trimmed Ore extension, and $\bar{\rho}$ as given in Lemma \ref{lem:ses-ore}. Define $\rho=\{\rho_p\colon \mc{K}'_{p}\to\mc{K}_{p}\}_{0\leq p\leq n}$ to be
	\[
	\rho_p(\mf{e}'_{i_1\ldots i_p})=(x\ot 1)\mf{e}_{i_1\ldots i_p}-(1\ot x)\sum_{j_1<\dots< j_p}\JJ^{i_1\ldots i_p}_{j_1\ldots j_p}\mf{e}_{j_1\ldots j_p}.
	\]
	Then $\rho$ is a lifting of $\bar{\rho}$.
\end{proposition}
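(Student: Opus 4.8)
The plan is to verify directly that $\rho$ is a chain map covering $\bar\rho$, i.e.\ to check the two conditions that define a lifting: (i) compatibility with the augmentations, so that $\rho$ induces $\bar\rho$ on the cokernels $E^\si\ot_RE$ and $E\ot_RE$; and (ii) commutativity with the differentials, $d_p\rho_p=\rho_{p-1}d'_p$ for every $p$. Condition (i) is the base case $p=0$: here $\mf e'_\emptyset\mapsto (x\ot1)\mf e_\emptyset-(1\ot x)\mf e_\emptyset$, which is exactly $\bar\rho(1\ot1)=x\ot1-1\ot x$ after identifying $\mathcal K'_0=E\ot_RE$ with generator $\mf e_\emptyset$ and $\JJ^{\emptyset}_{\emptyset}=1$. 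So the real content is the chain-map identity.

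For condition (ii) I would expand both sides on a basis element $\mf e'_{i_1\ldots i_p}$ and compare coefficients of each $\mf e_{k_1\ldots k_{p-1}}$. Applying $d'_p$ first and then $\rho_{p-1}$ gives
\begin{equation*}
\rho_{p-1}d'_p(\mf e'_{i_1\ldots i_p})
=\sum_{v=1}^p(-1)^v (\difl z_{i_v})\Bigl[(x\ot1)\mf e_{\cdots\widehat{i_v}\cdots}-(1\ot x)\!\!\sum_{j_1<\cdots<j_{p-1}}\!\!\JJ^{i_1\ldots\widehat{i_v}\ldots i_p}_{j_1\ldots j_{p-1}}\mf e_{j_1\ldots j_{p-1}}\Bigr],
\end{equation*}
whereas $d_p\rho_p(\mf e'_{i_1\ldots i_p})$ produces terms $(x\ot1)d_p(\mf e_{i_1\ldots i_p})$ and $-(1\ot x)\sum\JJ^{i_1\ldots i_p}_{j_1\ldots j_p}d_p(\mf e_{j_1\ldots j_p})$. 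The $(x\ot1)$-parts match because $x$ is central relative to the $d_p$ differential in the sense that $(x\ot1)d_p=d_p(x\ot1)$ on $\mathcal K_\sbu$, and $d_p$ acts on $\mf e_{i_1\ldots i_p}$ through the same alternating sum that $d'_p$ uses. The crux is therefore matching the $(1\ot x)$-parts, which amounts to a cofactor-type identity among the minors $\JJ$.

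The identity I expect to need is a noncommutative Laplace/cofactor expansion: for each fixed $(k_1,\ldots,k_{p-1})$,
\begin{equation*}
\sum_{v=1}^p(-1)^v(\difl z_{i_v})\,\JJ^{i_1\ldots\widehat{i_v}\ldots i_p}_{k_1\ldots k_{p-1}}
=\sum_{w=1}^p(-1)^w \JJ^{i_1\ldots i_p}_{k_1\ldots\widehat{k_w}\ldots k_p}\,(\diffd z_{k_w})\Big|_{k_w\ \text{inserted}},
\end{equation*}
where the right side encodes how $d_p$ strips a column $\diffd z_{k_w}$ off the $p\times p$ minor $\JJ^{i_1\ldots i_p}_{k_1\ldots k_p}$. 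Making this precise is the main obstacle: I would use $\difl z_i=1\ot z_i-\si_i\ot1$ versus $\diffd z_i=1\ot z_i-z_i\ot1$, together with Lemma~\ref{lem:noncomm-deriv} applied to the $\si_i$, namely $\diffd\si_i=\sum_j\De_j(\si_i)\diffd z_j$, to convert the left-hand differentials into the right-hand ones. The Leibniz-type behaviour of $\De_p$ and the expansion of a determinant along a row (the row indexed by $i_v$) should turn the alternating sum over $v$ on the left into the alternating sum over columns $k_w$ on the right, which is exactly the action of $d_p$ after the $\JJ$-substitution in $\rho_p$. I would prove this cofactor identity by induction on $p$, expanding $\JJ^{i_1\ldots i_p}_{k_1\ldots k_p}$ along its first row and tracking signs carefully; once it is established, the coefficient-by-coefficient comparison closes the chain-map identity and hence shows $\rho$ is a lifting of $\bar\rho$.
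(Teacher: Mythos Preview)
Your plan is essentially the paper's own argument: expand $\rho_{p-1}d'_p$ and $d_p\rho_p$ on a basis element, see that the $(x\ot 1)$-pieces agree, and reduce the $(1\ot x)$-pieces to a cofactor identity handled with Lemma~\ref{lem:noncomm-deriv}. Two points need to be sharpened, though.

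First, the $(x\ot 1)$-parts do \emph{not} match because ``$d_p$ acts through the same alternating sum that $d'_p$ uses'': it does not, since $d'_p$ is built from $\difl z_i=1\ot z_i-\si_i\ot 1$ while $d_p$ is built from $\diffd z_i=1\ot z_i-z_i\ot 1$. What actually makes the two sides agree is the Ore relation $xz_i=\si_i x$, which yields the commutation $\difl z_i\cdot(x\ot 1)=(x\ot 1)\cdot\diffd z_i$ in $E^e$. The companion identity $\difl z_i\cdot(1\ot x)=(1\ot x)\cdot\diffd\si_i$ (same Ore relation, used in the opposite tensor slot) is what converts the $\difl z_{i_v}$ in your displayed identity into $\diffd\si_{i_v}$; only after that step does Lemma~\ref{lem:noncomm-deriv} apply. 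Your write-up gestures at this conversion but the mechanism should be stated explicitly.

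Second, once you have $\diffd\si_{i_v}=\sum_l\De_l(\si_{i_v})\diffd z_l$, the cofactor identity you isolate is precisely the paper's equation~\eqref{eq:eq1}, and no induction on $p$ is needed: the alternating sum $\sum_v(-1)^v\De_l(\si_{i_v})\JJ^{i_1\ldots\widehat{i_v}\ldots i_p}_{j_1\ldots j_{p-1}}$ is the expansion of $-\JJ^{i_1\ldots i_p}_{lj_1\ldots j_{p-1}}$ along its first column, after which a reindexing over the position of $l$ among $j_1,\ldots,j_{p-1}$ matches the right-hand side directly.
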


\begin{proof}
	Let us compute $\rho_{p-1}d'_p(\mf{e}'_{i_1\ldots i_p})$ and $d_p\rho_p(\mf{e}'_{i_1\ldots i_p})$ for all $p$. We have
	\begin{align*}
	&\varphantom{=}\rho_{p-1}d'_p(\mf{e}'_{i_1\ldots i_p})=\rho_{p-1}\biggl(\sum_{v=1}^{p}(-1)^{v}\difl z_{i_v}\mf{e}'_{i_1\ldots\widehat{i_v}\ldots i_p}\biggr)\\
	&=\sum_{v=1}^{p}(-1)^{v}\difl z_{i_v}\biggl((x\ot 1)\mf{e}_{i_1\ldots\widehat{i_v}\ldots i_p}-\sum_{j_1\dots j_{p-1}}(-1)^{v}(1\ot x)\JJ^{i_1\ldots\widehat{i_v}\ldots i_p}_{j_1\ldots j_{p-1}}\mf{e}_{j_1\ldots j_{p-1}}\biggr)\\
	&=(x\ot 1)\sum_{v=1}^{p}(-1)^{v}\diffd z_{i_v}\mf{e}_{i_1\ldots\widehat{i_v}\ldots i_p}-(1\ot x)\sum_{v=1}^{p}\sum_{j_1\dots j_{p-1}}(-1)^{v}\diffd \si_{i_v}\JJ^{i_1\ldots\widehat{i_v}\ldots i_p}_{j_1\ldots j_{p-1}}\mf{e}_{j_1\ldots j_{p-1}},
	\end{align*}
	and
	\begin{align*}
	&\varphantom{=}d_p\rho_p(\mf{e}'_{i_1\ldots i_p})=d_p\biggl((x\ot 1)\mf{e}_{i_1\ldots i_p}-(1\ot x)\sum_{j_1\dots j_p}\JJ^{i_1\ldots i_p}_{j_1\ldots j_p}\mf{e}_{j_1\ldots j_p}\biggr)\\
	&=(x\ot 1)\sum_{v=1}^{p}(-1)^{v}\diffd z_{i_v}\mf{e}_{i_1\ldots\widehat{i_v}\ldots i_p}-(1\ot x)\sum_{v=1}^{p}\sum_{j_1\dots j_p}(-1)^{v}\diffd z_{j_v}\JJ^{i_1\ldots i_p}_{j_1\ldots j_p}\mf{e}_{j_1\ldots\widehat{j_v}\ldots j_p}.
	\end{align*}
	In order to prove that they are equal, it suffices to show
	\begin{equation}\label{eq:eq1}
	\sum_{v=1}^{p}\sum_{j_1\dots j_{p-1}}(-1)^{v}\diffd \si_{i_v}\JJ^{i_1\ldots\widehat{i_v}\ldots i_p}_{j_1\ldots j_{p-1}}\mf{e}_{j_1\ldots j_{p-1}}=\sum_{v=1}^{p}\sum_{j_1\dots j_p}(-1)^{v}\diffd z_{j_v}\JJ^{i_1\ldots i_p}_{j_1\ldots j_p}\mf{e}_{j_1\ldots\widehat{j_v}\ldots j_p}.
	\end{equation}
	Notice that by Lemma \ref{lem:noncomm-deriv}, we have
	\begin{align*}
	\sum_{v=1}^{p}(-1)^{v}\diffd \si_{i_v}\JJ^{i_1\ldots\widehat{i_v}\ldots i_p}_{j_1\ldots j_{p-1}}=\sum_{l=1}^{n}\sum_{v=1}^{p}(-1)^{v}\De_l(\si_{i_v})\diffd z_l\JJ^{i_1\ldots\widehat{i_v}\ldots i_p}_{j_1\ldots j_{p-1}}=-\sum_{l=1}^{n}\diffd z_l\JJ^{i_1\ldots i_p}_{lj_1\ldots j_{p-1}},
	\end{align*}
	where the last equality holds by expanding the determinant $\JJ^{i_1\ldots i_p}_{lj_1\ldots j_{p-1}}$ along the first column.
	Since $\JJ^{i_1\ldots i_p}_{lj_1\ldots j_{p-1}}=0$ for all $l\in\{j_1,\ldots,j_{p-1}\}$, we have
	\begin{align*}
	\text{LHS of }\eqref{eq:eq1}&=-\sum_{j_1\dots j_{p-1}}\sum_{l=1}^{n}\diffd z_l\JJ^{i_1\ldots i_p}_{lj_1\ldots j_{p-1}}\mf{e}_{j_1\ldots j_{p-1}}\\
	&=-\sum_{lj_1\dots j_{p-1}}\diffd z_l\JJ^{i_1\ldots i_p}_{lj_1\ldots j_{p-1}}\mf{e}_{\widehat{l}j_1\ldots j_{p-1}}+\sum_{j_1l\dots j_{p-1}}\diffd z_l\JJ^{i_1\ldots i_p}_{j_1l\ldots j_{p-1}}\mf{e}_{j_1\widehat{l}\ldots j_{p-1}}\\
	&\varphantom{=}{}+\cdots+\sum_{j_1\dots j_{p-1}l}(-1)^{p}\diffd z_l\JJ^{i_1\ldots i_p}_{j_1\ldots j_{p-1}l}\mf{e}_{j_1\ldots j_{p-1}\widehat{l}}\\
	&=\text{RHS of }\eqref{eq:eq1}. \qedhere
	\end{align*}
\end{proof}

\begin{proposition}\label{prop:lifting-diff}
	Let $E=R[x;\de]$ be a differential Ore extension, and $\bar{\rho}$ as given in Lemma \ref{lem:ses-ore}. Define $\rho=\{\rho_p\colon \mc{K}'_{p}\to\mc{K}_{p}\}_{0\leq p\leq n}$ to be
	\[
	\rho_p(\mf{e}'_{i_1\ldots i_p})=(x\ot 1-1\ot x)\mf{e}_{i_1\ldots i_p}+\sum_{s=1}^{n}\sum_{u=1}^{p}(-1)^u\De_{s}(\de_{i_u})\mf{e}_{si_1\ldots \widehat{i_u}\dots i_p}.
	\]
	Then $\rho$ is a lifting of $\bar{\rho}$.
\end{proposition}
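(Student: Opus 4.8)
The plan is to verify directly, exactly as in the proof of Proposition \ref{prop:lifting-trimmed}, that $\rho=\{\rho_p\}$ is a morphism of complexes lifting $\bar\rho$: since $\mc{K}'_\sbu$ and $\mc{K}_\sbu$ are projective resolutions, it suffices to check that $\rho_0$ is compatible with the augmentations and that $d_p\rho_p=\rho_{p-1}d'_p$ for every $p$. Compatibility of $\rho_0$ is immediate, since $\rho_0(\mf{e}'_{})=(x\ot1-1\ot x)\mf{e}_{}$ maps to $x\ot1-1\ot x=\bar\rho(1\ot1)$ under the augmentation. The simplification special to the differential case is that $\si=\id$, so $\difl z_{i_v}=1\ot z_{i_v}-z_{i_v}\ot1=\diffd z_{i_v}$; thus $\mc{K}'_\sbu$ and $\mc{K}_\sbu$ carry the same differentials, and the combinatorial bookkeeping by the determinants $\JJ^{\cdots}_{\cdots}$ in the trimmed case is now replaced by the derivations $\De_s(\de_{i_u})$.

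The conceptual heart of the argument is a commutator computation in $E^e$. Regarding $d_p$ and $d'_p$ as left $E^e$-linear maps, I would pull the coefficient $x\ot1-1\ot x$ through $d_p$ in $d_p\rho_p$ and pull $\diffd z_{i_v}$ through $\rho_{p-1}$ in $\rho_{p-1}d'_p$; the discrepancy between the two \emph{first terms} is then governed by the commutator of $x\ot1-1\ot x$ with $\diffd z_{i_v}$ in $E^e$. Using the defining relation $xz_{i_v}-z_{i_v}x=\de_{i_v}$ of the Ore extension (and its opposite in the second tensor slot), a short calculation yields
\[
[\,x\ot1-1\ot x,\ \diffd z_{i_v}\,]=1\ot\de_{i_v}-\de_{i_v}\ot1=\diffd\de_{i_v},
\]
which by Lemma \ref{lem:noncomm-deriv} equals $\sum_{s=1}^{n}\De_s(\de_{i_v})\diffd z_s$. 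This is precisely the mechanism that forces the second sum in the definition of $\rho_p$.

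With this identity I would organize both $d_p\rho_p(\mf{e}'_{i_1\ldots i_p})$ and $\rho_{p-1}d'_p(\mf{e}'_{i_1\ldots i_p})$ into a ``first part'' (the terms carrying $x\ot1-1\ot x$) and a ``second part'' (the terms carrying some $\De_s(\de_{i_u})$). The commutator computation shows the two first parts differ exactly by $\sum_{v,s}(-1)^v\De_s(\de_{i_v})\diffd z_s\,\mf{e}_{i_1\ldots\widehat{i_v}\ldots i_p}$. On the other hand, applying $d_p$ to the second sum of $\rho_p$ and isolating the term that deletes the prepended index $s$ (the first index of $\mf{e}_{si_1\ldots\widehat{i_u}\ldots i_p}$, contributing a sign $(-1)^1$) produces exactly the negative of this expression, so these contributions cancel. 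What remains on each side are the ``double-deletion'' terms, in which one index is removed by the differential and another by a $\De_s(\de_{\cdot})$-summand of $\rho$, with $s$ prepended; these I would match term by term against the double-deletion terms coming from the second sum of $\rho_{p-1}$ in $\rho_{p-1}d'_p$.

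The step I expect to be the main obstacle is this last sign bookkeeping for the double-deletion terms. One must track the antisymmetrization convention for the symbols $\mf{e}_{j_1\ldots j_p}$ when $s$ is prepended and when two indices are deleted out of order, and check that the sign arising from ``$d$ then $\rho$'' coincides with that from ``$\rho$ then $d$'' in both cases, according as the $\De$-index precedes or follows the $\diffd z$-index; in each case the two signs turn out to agree, so the double-deletion terms coincide. Once this is settled the identity $d_p\rho_p=\rho_{p-1}d'_p$ follows. The low-degree cases $p=0,1$, where double deletions do not occur, should be checked separately but reduce immediately to the same commutator identity.
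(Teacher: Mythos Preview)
Your plan is correct and is essentially the paper's own argument: the paper also computes $\rho_{p-1}d'_p$ and $d_p\rho_p$ directly, uses (implicitly) the same commutator identity $\diffd z_{i_v}(x\ot1-1\ot x)=(x\ot1-1\ot x)\diffd z_{i_v}-\diffd\de_{i_v}$ to produce the $\diffd\de_{i_v}$ term, and then matches the remaining double-deletion terms. The only cosmetic difference is that the paper packages those double-deletion terms on each side as the $2\times2$ determinants $\bigl|\begin{smallmatrix}\De_s(\de_{i_u}) & \diffd z_{i_u}\\ \De_s(\de_{i_v}) & \diffd z_{i_v}\end{smallmatrix}\bigr|$, which makes the sign bookkeeping you flag as the ``main obstacle'' transparent rather than a case-by-case check.
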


\begin{proof}
Like in the proof of Proposition \ref{prop:lifting-diff}, we have
	\begin{align*}
	&\varphantom{=}\rho_{p-1}d'_p(\mf{e}'_{i_1\ldots i_p})=\rho_{p-1}\biggl(\sum_{v=1}^{p}(-1)^{v}\diffd z_{i_v}\mf{e}'_{i_1\ldots\widehat{i_v}\ldots i_p}\biggr)\\
	&=\sum_{v=1}^{p}(-1)^{v}\diffd z_{i_v}\biggl((x\ot 1-1\ot x)\mf{e}_{i_1\ldots\widehat{i_v}\ldots i_p}+\sum_{s=1}^{n}\sum_{u=1}^{v-1}(-1)^u \De_s(\de_{i_u}) \mf{e}_{si_1\dots\widehat{i_u}\dots\widehat{i_v}\ldots i_p}\\
	&\varphantom{=}{}+\sum_{s=1}^{n}\sum_{u=v+1}^{p}(-1)^{u-1}\De_s(\de_{i_u}) \mf{e}_{si_1\dots\widehat{i_v}\ldots \widehat{i_u}\ldots i_p}\ \biggr)\\
	&=(x\ot 1-1\ot x)\sum_{v=1}^{p}(-1)^{v}\diffd z_{i_v}\mf{e}_{i_1\ldots\widehat{i_v}\ldots i_p}-\sum_{v=1}^{p}(-1)^{v} \diffd \de_{i_v}\mf{e}_{i_1\ldots\widehat{i_v}\ldots i_p}\\
	&\varphantom{=}{}+\sum_{s=1}^{n}\sum_{1\leq u< v\leq p}(-1)^{u+v}
	\begin{vmatrix}
	\De_s(\de_{i_u}) & \diffd z_{i_u} \\[.8ex] \De_s(\de_{i_v}) & \diffd z_{i_v}
	\end{vmatrix}
	\mf{e}_{si_1\ldots\widehat{i_u}\ldots\widehat{i_v}\ldots i_p},
	\end{align*}
	and by Lemma \ref{lem:noncomm-deriv},
	\begin{align*}
	&\varphantom{=}d_p\rho_p(\mf{e}'_{i_1\ldots i_p})=d_p\biggl((x\ot 1-1\ot x)\mf{e}_{i_1\ldots i_p}+\sum_{s=1}^{n}\sum_{u=1}^{p}(-1)^u\De_{s}(\de_{i_u})\mf{e}_{si_1\ldots \widehat{i_u}\dots i_p}\biggr)\\
	&=(x\ot 1-1\ot x)\sum_{v=1}^{p}(-1)^{v}\diffd z_{i_v}\mf{e}_{i_1\ldots\widehat{i_v}\ldots i_p}+\sum_{s=1}^{n}\sum_{u=1}^{p}(-1)^u\De_{s}(\de_{i_u})\biggl( -\diffd z_s \mf{e}_{i_1\ldots \widehat{i_u}\dots i_p}\\
	&\varphantom{=}{}+\sum_{v=1}^{u-1}(-1)^{v+1}\diffd z_{i_v} \mf{e}_{si_1\ldots\widehat{i_v}\dots \widehat{i_u}\dots i_p}+\sum_{v=u+1}^{p}(-1)^{v}\diffd z_{i_v} \mf{e}_{si_1\ldots\widehat{i_u}\dots \widehat{i_v}\dots i_p} \biggr)\\
	&=(x\ot 1-1\ot x)\sum_{v=1}^{p}(-1)^{v}\diffd z_{i_v}\mf{e}_{i_1\ldots\widehat{i_v}\ldots i_p}-\sum_{u=1}^{p}(-1)^u\diffd \de_{i_u}\mf{e}_{i_1\ldots \widehat{i_u}\dots i_p}\\
	&\varphantom{=}{}+\sum_{s=1}^{n}\sum_{1\leq u< v\leq p}(-1)^{u+v}
	\begin{vmatrix}
	\De_s(\de_{i_u}) & \diffd z_{i_u} \\[.8ex] \De_s(\de_{i_v}) & \diffd z_{i_v}
	\end{vmatrix}
	\mf{e}_{si_1\ldots\widehat{i_u}\ldots\widehat{i_v}\ldots i_p}.
	\end{align*}
	Therefore, $\rho_{p-1}d'_p=d_p\rho_p$ holds true for all $p$.
\end{proof}

\begin{corollary}\label{cor:cone-resolu}
	Let $E$ and $\rho$ be in Propositions \ref{prop:lifting-trimmed} or \ref{prop:lifting-diff}, $\cone(\rho)$ is a free resolution of $E$ as a left $E^e$-module.
\end{corollary}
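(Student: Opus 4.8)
The plan is to read off the homology of $\cone(\rho)$ from the long exact sequence attached to a mapping cone, using that both $\mc{K}_{\sbu}$ and $\mc{K}'_{\sbu}$ are already known to be resolutions and that $\rho$ lifts the injection $\bar\rho$. First I would settle freeness and boundedness, which are immediate: each $\mc{K}^R_p$ is free over $R\ot R$ with a finite basis, so applying $E\ot_R(-)\ot_RE$ turns it into a finitely generated free $E^e$-module, namely $E\ot_R(R\ot R)\ot_RE\cong E\ot E$ in each basis degree; hence every $\mc{K}_p$ and $\mc{K}'_p$ is finitely generated free over $E^e$. Consequently each term $\cone(\rho)_p=\mc{K}_p\oplus\mc{K}'_{p-1}$ is again finitely generated and free, and since the Koszul complex has length $n$ the cone is concentrated in degrees $0$ through $n+1$. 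Thus only acyclicity in positive degrees and the identification $H_0\cong E$ remain.

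For those I would invoke the tautological short exact sequence of complexes associated to the chain map $\rho$ (the inclusion of $\mc{K}_{\sbu}$ into the cone followed by the projection onto the shifted $\mc{K}'_{\sbu}$), whose connecting homomorphism is exactly the map induced by $\rho$. This yields the long exact homology sequence
\[
\cdots\to H_p(\mc{K}'_{\sbu})\xrightarrow{\ \rho_*\ }H_p(\mc{K}_{\sbu})\to H_p(\cone(\rho))\to H_{p-1}(\mc{K}'_{\sbu})\xrightarrow{\ \rho_*\ }H_{p-1}(\mc{K}_{\sbu})\to\cdots.
\]
Because $\mc{K}_{\sbu}$ is a resolution of $E\ot_RE$ and $\mc{K}'_{\sbu}$ a resolution of $E^\si\ot_RE$, their homology is concentrated in degree $0$, and the degree-$0$ map $\rho_*$ is precisely $\bar\rho$, since $\rho$ was built as a lifting of $\bar\rho$ in Proposition \ref{prop:lifting-trimmed} (respectively Proposition \ref{prop:lifting-diff}).

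Feeding this information into the sequence collapses it. For $p\geq 2$ both neighbours $H_p(\mc{K}_{\sbu})$ and $H_{p-1}(\mc{K}'_{\sbu})$ vanish, so $H_p(\cone(\rho))=0$; and the tail of the sequence becomes
\[
0\to H_1(\cone(\rho))\to E^\si\ot_RE\xrightarrow{\ \bar\rho\ }E\ot_RE\to H_0(\cone(\rho))\to 0.
\]
By Lemma \ref{lem:ses-ore} the map $\bar\rho$ is injective with cokernel $E$, whence $H_1(\cone(\rho))=\ker\bar\rho=0$ and $H_0(\cone(\rho))=\operatorname{coker}\bar\rho=E$. This exhibits $\cone(\rho)$ as a bounded resolution of $E$ by finitely generated free $E^e$-modules, as claimed. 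I do not expect a genuine obstacle here, since the two substantive inputs—that $\rho$ is a chain map and that $\bar\rho$ is injective—are already in hand; the only point needing care is bookkeeping of the mapping-cone sign conventions, so that $d^{\cone}$ squares to zero (guaranteed by $\rho$ being a chain map) and so that the connecting map in the long exact sequence is literally $\bar\rho$ rather than its negative, which does not affect kernel or cokernel.
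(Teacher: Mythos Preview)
Your argument is correct and is precisely the standard mapping-cone reasoning the paper is tacitly invoking; the paper itself offers no proof for this corollary, treating it as an immediate consequence of Lemma~\ref{lem:ses-ore} together with the fact that $\rho$ lifts $\bar\rho$. Your write-up simply makes explicit the long exact sequence and the freeness check that the authors leave to the reader.
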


\section{Nakayama automorphisms}\label{sec:naka-auto}

In order to compute the Nakayama automorphism $\nu$ of trimmed or differential $E$ (or equivalently the cohomological group $\Ext^{n+1}_{E^e}(E, E^e)$), let us observe the differential $d^\rho_{n+1}\colon\cone(\rho)_{n+1}\to \cone(\rho)_n$. By Corollary \ref{cor:cone-resolu}, we know
\[
\Ext^{n+1}_{E^e}(E, E^e)=H^{n+1}\Hom_{E^e}(\cone(\rho), E^e)=E^e/\im d_{n+1}^{\rho\vee}
\]
where $d_{n+1}^{\rho\vee}=\Hom_{E^e}(d_{n+1}^{\rho}, E^e)$ is the dual of $d_{n+1}^{\rho}$. The $E$-bimodule structure on $E^e/\im d_{n+1}^{\rho\vee}$ inherits from the multiplication on $E^e$ from right, i.e., 
\[
e_1\triangleright \Bigl(\sum x_1\ot x_2+\im d_{n+1}^{\rho\vee}\Bigr)\triangleleft e_2=\sum x_1e_2\ot e_1x_2+\im d_{n+1}^{\rho\vee}.
\]
On the other hand, $\Ext^{n+1}_{E^e}(E, E^e)\cong E^\nu$ is proven in Theorem \ref{thm:Ore-LWW}.

\begin{lemma}\label{lem:nak-tensor}
Let $E$ be trimmed or differential whose Nakayama automorphism is $\nu$. For any $a$, $f$ in $E$, $\nu(a)=f$ if and only if $a\ot 1-1\ot f \in \im d_{n+1}^{\rho\vee}$. 
\end{lemma}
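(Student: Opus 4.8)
The plan is to transfer the whole question across the bimodule isomorphism $\Ext^{n+1}_{E^e}(E,E^e)\cong E^\nu$ supplied by Theorem~\ref{thm:Ore-LWW} (applied with $\nu_R=\id$ to the resolution $\cone(\rho)$ of Corollary~\ref{cor:cone-resolu}), and to read off everything from how the two one-sided $E$-actions move the distinguished class $\overline{1\ot1}$, where I write $\overline{u}$ for the image of $u\in E^e$ in $E^e/\im d_{n+1}^{\rho\vee}$.

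First I would record the actions. From the displayed formula for the bimodule structure on $E^e/\im d_{n+1}^{\rho\vee}$ one gets at once $\overline{1\ot1}\triangleleft a=\overline{a\ot1}$ and $f\triangleright\overline{1\ot1}=\overline{1\ot f}$ for all $a,f\in E$; in particular $\overline{e_2\ot e_1}=e_1\triangleright\overline{1\ot1}\triangleleft e_2$, so $\overline{1\ot1}$ generates the bimodule, and the membership $a\ot1-1\ot f\in\im d_{n+1}^{\rho\vee}$ is precisely the relation $\overline{a\ot1}=\overline{1\ot f}$. Now fix the comparison isomorphism $\phi\colon E^e/\im d_{n+1}^{\rho\vee}\xrightarrow{\sim}E^\nu$ coming from Theorem~\ref{thm:Ore-LWW}, normalised so that $\phi(\overline{1\ot1})=1$. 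Since $\phi$ is a morphism of bimodules, $\phi(\overline{a\ot1})=\phi(\overline{1\ot1})\triangleleft a=1\triangleleft a=\nu(a)$ and $\phi(\overline{1\ot f})=f\triangleright\phi(\overline{1\ot1})=f\triangleright1=f$, using that in $E^\nu$ the left action is ordinary multiplication and the right action is twisted by $\nu$. As $\phi$ is bijective, $\overline{a\ot1}=\overline{1\ot f}$ holds if and only if $\nu(a)=f$, which is the assertion.

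The only genuinely delicate point — and the step I expect to be the main obstacle — is the normalisation $\phi(\overline{1\ot1})=1$, i.e.\ that the distinguished class is carried to a central element of $E$. Writing $c=\phi(\overline{1\ot1})$ for an arbitrary bimodule isomorphism, the computation above gives $\phi(\overline{a\ot1})=c\,\nu(a)$ and $\phi(\overline{1\ot f})=fc$, so the equivalence I want amounts to $c\,\nu(a)=fc\iff\nu(a)=f$; this forces $c$ to be central (it is automatically nonzero, hence a non-zero-divisor since $E$ is a domain, so it can be cancelled). One cannot conclude centrality from abstract generation alone — a bimodule generator of $E^\nu$ need not even be a unit, as the Weyl algebra $\kk[z][x;\partial/\partial z]$ already shows — so the argument must instead track the explicit isomorphism built in Theorem~\ref{thm:Ore-LWW} from the data of $\cone(\rho)$: there $\cone(\rho)_{n+1}=\mc{K}'_n$ is free of rank one on $\mf{e}'_{1\ldots n}$, the class $\overline{1\ot1}$ is the image of its dual generator, and one checks that this generator is sent to $1\in E^\nu$. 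Granting that identification, all remaining steps are the purely formal manipulations of $\triangleright$ and $\triangleleft$ recorded above.
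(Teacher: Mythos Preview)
Your proof follows the paper's route exactly: transfer through a bimodule isomorphism $\theta\colon E^e/\im d_{n+1}^{\rho\vee}\to E^\nu$, and once $k:=\theta(\overline{1\ot1})$ is known to lie in $\kk^\times$ the equivalence is precisely the formal computation $\theta(\overline{a\ot1})=k\nu(a)$, $\theta(\overline{1\ot f})=fk$ that you give. The only difference is at the normalisation step you single out: rather than tracking the explicit construction behind Theorem~\ref{thm:Ore-LWW}, the paper writes $\theta^{-1}(1)=\sum\overline{y_1\ot y_2}$, obtains $1=\sum y_2k\nu(y_1)=k\sum y_2\nu(y_1)$, and concludes $k\in\kk\setminus\{0\}$ directly (the iterated Ore extension $E$ having only scalar units).
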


\begin{proof}
We choose an automorphism $\theta\colon E^e/\im d_{n+1}^{\rho\vee}\to E^\nu$, and suppose $\theta^{-1}(1)=\sum \overline{y_1\ot y_2}:=\sum y_1\ot y_2+\im d_{n+1}^{\rho\vee}$ and $\theta(\overline{1\ot 1})=k$. It follows that
\[
1=\theta\Bigl(\sum \overline{y_1\ot y_2}\Bigr)=\sum y_2\triangleright\theta(\overline{1\ot 1})\triangleleft y_1=\sum y_2k\nu(y_1)=k\sum y_2\nu(y_1).
\]
Necessarily, $k\in\kk\setminus\{0\}$, and without loss of generality, we assume $k=1$. Hence
\begin{align*}
 \nu(a)=f & \iff \theta(\overline{1\ot 1})\vartriangleleft a=f\vartriangleright \theta(\overline{1\ot 1}) \\
 & \iff \theta(\overline{a\ot 1})=\theta(\overline{1\ot f}) \\
 &\iff \overline{a\ot 1}=\overline{1\ot f} \\
 &\iff a\ot 1-1\ot f \in \im d_{n+1}^{\rho\vee}. \qedhere
\end{align*}
\end{proof}

\subsection{Trimmed case}\label{subsec:trimmed}
As free $E^e$-modules, $\cone(\rho)_{n+1}$ and $\cone(\rho)_{n}$ are of rank 1 and $n$ respectively. By Proposition \ref{prop:lifting-trimmed},
\begin{align*}
d^\rho_{n+1}(\mf{e}'_{12\ldots n})&=\rho_n(\mf{e}'_{12\ldots n})+d'_n(\mf{e}'_{12\ldots n})\\
&=\bigl( x\ot 1-(1\ot x)\JJ\bigr) \mf{e}_{12\ldots n}+\sum_{i=1}^{n}(-1)^i\difl z_i \mf{e}'_{1\ldots\hat{i}\ldots n}. 
\end{align*}
The differential is thus represented by the matrix (multiplying from right)
\[
M=\bigl( x\ot 1-(1\ot x)\JJ, -\difl z_1, \difl z_2, \ldots, (-1)^n\difl z_n \bigr).
\]
We then immediately have $f\ot1-1\ot \si^{-1}(f)\in\im d_{n+1}^{\rho\vee}$ for all $f\in R$. It follows from Lemma \ref{lem:nak-tensor} that $\nu(f)=\si^{-1}(f)$, in accordance with Theorem \ref{thm:Ore-LWW}. For $\nu(x)$, we write $\JJ=\sum g_1\ot g_2$ and recall that the Jacobian determinant $J$ of $\si$ satisfies $J=\mu(\JJ)=\sum g_1g_2$. Hence
\begin{align*}
x\ot 1 &\equiv (1\ot x)\JJ=\sum g_1\ot g_2x \\
&\equiv \sum 1\ot g_2x\si^{-1}(g_1)=\sum 1\ot g_2g_1x \\
&=1\ot Jx  \pmod {\im d_{n+1}^{\rho\vee}}.
\end{align*}
By Lemma \ref{lem:nak-tensor}, $\nu(x)=Jx$. 

\begin{remark}
	When $\si$ is a graded algebra homomorphism,  $J$ is equal to the homological determinant $\mathrm{hdet}(\si)$ (cf.\ \cite{Jorgensen-Zhang:gorenstein}). Hence, the formula $\nu(x)=Jx$ coincides with \cite[Prop.\ 3.15]{Zhu-Oystaeyen-Zhang:double-ore-extension}.
\end{remark}

\subsection{Differential case}
In this case, by Proposition \ref{prop:lifting-diff}, 
\begin{align*}
d^\rho_{n+1}(\mf{e}'_{12\ldots n})&=\rho_n(\mf{e}'_{12\ldots n})+d'_n(\mf{e}'_{12\ldots n})\\
&=\biggl( x\ot 1-1\ot x-\sum_{u=1}^{n}\De_{u}(\de_{u})\biggr) \mf{e}_{12\ldots n}+\sum_{i=1}^{n}(-1)^i\diffd z_i \mf{e}'_{1\ldots\hat{i}\ldots n},
\end{align*}
and thus
\[
M=\biggl( x\ot 1-1\ot x-\sum_{u=1}^{n}\De_u(\de_u), -\diffd z_1, \diffd z_2, \ldots, (-1)^n\diffd z_n \biggr).
\] 
Similar with the trimmed case, we have $\nu(f)=f$ for any polynomial $f\in R$,  in accordance with Theorem \ref{thm:Ore-LWW}.

Since in the differential case
\[
\de=\sum_{i=1}^{n}\de_i\frac{\pl}{\pl z_i},
\]
we then have the vector field $\mc{X}_\de=(\de_1,\de_2,\dots,\de_n)$ on the affine space $\mbb{A}^n_\kk$, and then $\nabla\cdot\mc{X}_\de$ denotes the divergence of this vector field. 

\begin{theorem}\label{thm:main-thm-diff}
	Let $E=R[x;\de]$ be a differential Ore extension. The Nakayama automorphism $\nu$ of $E$ is then given by $\nu|_R=\id$ and
	\[
	\nu(x)=x+\nabla\cdot\mc{X}_\de.
	\]
\end{theorem}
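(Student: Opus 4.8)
The plan is to compute $\nu(x)$ directly from Lemma~\ref{lem:nak-tensor}, which reduces the problem to locating a suitable element of $\im d_{n+1}^{\rho\vee}$. By the displayed formula for $M$ in the differential case, the dual differential $d_{n+1}^{\rho\vee}$ sends the generator dual to $\mf{e}_{12\ldots n}$ to (the image in $E^e/\im d_{n+1}^{\rho\vee}$ of) the first matrix entry, and it sends the generators dual to the $\mf{e}'_{1\ldots\hat{i}\ldots n}$ to the entries $(-1)^i\diffd z_i$. The upshot is that, working modulo $\im d_{n+1}^{\rho\vee}$, we have the two families of relations $\diffd z_i = 1\ot z_i - z_i\ot 1 \equiv 0$ (so that $z_i\ot 1 \equiv 1\ot z_i$, recovering $\nu|_R=\id$) and
\[
x\ot 1 \equiv 1\ot x + \sum_{u=1}^{n}\De_u(\de_u) \pmod{\im d_{n+1}^{\rho\vee}}.
\]

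The heart of the argument is to show that $\sum_{u=1}^{n}\De_u(\de_u)$, viewed after passing to $E^e/\im d_{n+1}^{\rho\vee}$ and using the relations $z_i\ot 1\equiv 1\ot z_i$, collapses to the single tensor $1\ot(\nabla\cdot\mc{X}_\de)$. First I would recall that $\mu\De_u=\pl/\pl z_u$, so that applying the multiplication map gives $\mu\bigl(\sum_u\De_u(\de_u)\bigr)=\sum_u \pl\de_u/\pl z_u=\nabla\cdot\mc{X}_\de$ by the very definition of the divergence of $\mc{X}_\de=(\de_1,\ldots,\de_n)$. The key observation is that passing to the quotient by $\im d_{n+1}^{\rho\vee}$ has exactly the effect of the multiplication map on the relevant factor: since $z_i\ot 1\equiv 1\ot z_i$ for every $i$, any tensor $a\ot b$ with $a,b\in R$ satisfies $a\ot b\equiv 1\ot ab=1\ot \mu(a\ot b)$. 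Applying this termwise to $\sum_u\De_u(\de_u)\in R\ot R$ yields $\sum_u\De_u(\de_u)\equiv 1\ot\mu\bigl(\sum_u\De_u(\de_u)\bigr)=1\ot(\nabla\cdot\mc{X}_\de)$, and hence $x\ot 1\equiv 1\ot\bigl(x+\nabla\cdot\mc{X}_\de\bigr)$. Lemma~\ref{lem:nak-tensor} then gives $\nu(x)=x+\nabla\cdot\mc{X}_\de$ at once.

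The main obstacle I anticipate is justifying the reduction $a\ot b\equiv 1\ot ab$ with full care, rather than treating it as a formal manipulation. Concretely, one must verify that the congruence $z_i\ot 1\equiv 1\ot z_i$ generated by the entries $\diffd z_i$ does propagate to products, i.e.\ that $\im d_{n+1}^{\rho\vee}$ is closed under the left and right $E^e$-multiplications in the precise sense needed to move each $z_i$ across the tensor sign one variable at a time. Here I would lean on the explicit description of the $E$-bimodule action on $E^e/\im d_{n+1}^{\rho\vee}$ recorded just before Lemma~\ref{lem:nak-tensor}, namely $e_1\triangleright(\overline{x_1\ot x_2})\triangleleft e_2=\overline{x_1e_2\ot e_1x_2}$, together with the fact that $\diffd z_i$ lies in the image, to slide monomials across and iterate. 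A secondary point requiring attention is that $\De_u(\de_u)$ genuinely lands in $R\ot R$ (both tensor factors polynomial), so that the congruence $a\ot b\equiv 1\ot ab$ applies termwise without any $x$ appearing inside; this is clear since $\de_u\in R$ and $\De_u$ maps $R$ to $R\ot R$ by construction. Once these bookkeeping issues are settled, the identification of $\nu(x)$ is immediate, and the statement $\nu|_R=\id$ follows from the same analysis applied to the entries $\diffd z_i$.
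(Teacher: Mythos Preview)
Your proposal is correct and follows essentially the same route as the paper. The paper compresses your variable-by-variable sliding into the single identity $f_1\ot f_2=-\diffd f_1\,(1\ot f_2)+1\ot f_1f_2$ (with $\sum f_1\ot f_2:=\sum_u\De_u(\de_u)$) and then invokes Lemma~\ref{lem:noncomm-deriv} to express $\diffd f_1$ as an $R\ot R$-combination of the $\diffd z_p$, which lands the first term in $\im d_{n+1}^{\rho\vee}$ at once; this is exactly the mechanism you outline, and your caution about closure of the image under the bimodule action is resolved by the commutativity of $R$.
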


\begin{proof}
	It is sufficient to show the expression of $\nu(x)$ given in the theorem is true. In fact, let us write $\sum f_1\ot f_2$ for $\sum_{u=1}^{n}\De_u(\de_u)$. Then
	\begin{align*}
	x\ot 1 &\equiv 1\ot x+\sum f_1\ot f_2 \\
	&=1\ot x-\sum \diffd f_1(1\ot f_2)+1\ot \sum f_1f_2 \\
	&\equiv 1\ot x+\sum_{u=1}^{n}1\ot \mu(\De_u(\de_u)) \\
	&=1\ot \biggl( x+\sum_{u=1}^{n} \frac{\pl\de_u}{\pl z_u} \biggr) \\
	&=1\ot (x+\nabla\cdot \mc{X}_\de) \pmod {\im d_{n+1}^{\rho\vee}}. \qedhere
	\end{align*}
\end{proof}

\begin{corollary}\label{cor:CY-nece-suff}
	Let $E=R[x;\si,\de]$ be an Ore extension. Then $E$ is Calabi--Yau if and only if $\si=\id$ and $\nabla\cdot\mc{X}_\de=0$.
\end{corollary}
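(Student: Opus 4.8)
The plan is to prove Corollary \ref{cor:CY-nece-suff} by reducing it to the two main theorems of this section, using the fact that a homologically smooth algebra is Calabi--Yau precisely when its Nakayama automorphism is inner. Since $E=R[x;\si,\de]$ is an Ore extension over the polynomial algebra $R$, which is $\id$-twisted Calabi--Yau (it is a commutative polynomial ring, hence Calabi--Yau in the sense of Ginzburg), Theorem \ref{thm:Ore-LWW} guarantees that $E$ is $\nu$-twisted Calabi--Yau of dimension $n+1$, and in particular homologically smooth. Therefore $E$ is Calabi--Yau if and only if $\nu$ is inner. The crux is to show that the explicit $\nu$ computed in Theorems \ref{thm:main-thm-diff} and \ref{thm:main-thm-general} is inner if and only if $\si=\id$ and $\nabla\cdot\mc{X}_\de=0$.

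First I would dispose of the case $\si\neq\id$. Here $\nu|_R=\si^{-1}\neq\id$, so I must argue that no automorphism with $\nu|_R\neq\id$ can be inner. The key observation is that $R=\kk[z_1,\dots,z_n]$ lies in the center of $E$ only partially, but more usefully, any inner automorphism $\mathrm{ad}_u$ (for a unit $u\in E$) must fix the center of $E$ and, more to the point, restrict to $R$ in a constrained way. Since the units of $E$ are just the nonzero scalars $\kk^\times$ (as $E$ is a domain with $R$ a polynomial algebra and $x$ of positive degree in the natural filtration), every inner automorphism of $E$ is trivial. Thus $\nu$ inner forces $\nu=\id$, which already fails on $R$ when $\si\neq\id$; hence $E$ is not Calabi--Yau whenever $\si\neq\id$.

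It then remains to treat $\si=\id$, i.e.\ the differential case covered by Theorem \ref{thm:main-thm-diff}. There $\nu|_R=\id$ and $\nu(x)=x+\nabla\cdot\mc{X}_\de$. Using again that the only units of $E$ are nonzero scalars, $\nu$ is inner if and only if $\nu=\id$, which holds if and only if $\nu(x)=x$, i.e.\ if and only if $\nabla\cdot\mc{X}_\de=0$. Combining the two cases gives the stated equivalence.

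The main obstacle I anticipate is the step asserting that the units of $E$ are exactly $\kk^\times$, which pins down that the only inner automorphism is the identity. This should follow from a degree/leading-term argument on the filtration of $E$ by degree in $x$: the associated graded algebra is $R[x;\bar\si]$ or $R[x]$, a domain, so $E$ is a domain, and a product of two elements has $x$-degree equal to the sum of the $x$-degrees, forcing any unit to have $x$-degree $0$; then within $R$ the units are $\kk^\times$ since $R$ is a polynomial algebra. Once this is in hand, the identification of ``Calabi--Yau'' with ``$\nu=\id$'' is immediate and the corollary follows directly from Theorems \ref{thm:main-thm-diff} and \ref{thm:main-thm-general}.
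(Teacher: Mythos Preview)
Your proposal is correct and follows essentially the same route as the paper: use the explicit form of $\nu$ from Theorems~\ref{thm:Ore-LWW} and~\ref{thm:main-thm-diff} and check when $\nu$ is the identity. The paper's proof is the one-liner ``necessity follows from Theorem~\ref{thm:Ore-LWW}, sufficiency from Theorem~\ref{thm:main-thm-diff}''; you simply make explicit the step the paper leaves tacit, namely that the units of $E$ are $\kk^\times$ so that ``$\nu$ inner'' collapses to ``$\nu=\id$'' (and hence Theorem~\ref{thm:main-thm-general} is not actually needed for the $\si\neq\id$ direction).
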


\begin{proof}
	The necessity follows from Theorem \ref{thm:Ore-LWW}, and the sufficiency from Theorem \ref{thm:main-thm-diff}.
\end{proof}

\subsection{Non-differential case}

We are going to compute the Nakayama automorphism of $E=R[x;\si,\de]$ with $\si\neq \id$. First of all, let us present a lemma about $\si$-derivations.

\begin{lemma}\label{lem:skew-derivation}
	Let $\si\neq\id$ and $\de$ be a $\si$-derivation. Then there is a unique $\kappa$ in the quotient field $R_\q$ of $R$ such that $\de(h)=\kappa(\si(h)-h)$ for all $h\in R$.
\end{lemma}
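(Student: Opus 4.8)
The plan is to exploit the commutativity of $R$ to force the values $\de_i:=\de(z_i)$ to be simultaneously proportional to $\si_i-z_i$, and then to manufacture $\kappa$ out of that single proportionality constant. First I would record the defining Leibniz rule of a $\si$-derivation, $\de(fg)=\si(f)\de(g)+\de(f)g$, and apply it to the relation $z_iz_j=z_jz_i$. Computing both sides gives $\si_i\de_j+\de_iz_j=\si_j\de_i+\de_jz_i$, which after rearrangement becomes the family of compatibility relations
\[
\de_j(\si_i-z_i)=\de_i(\si_j-z_j),\qquad 1\leq i,j\leq n.
\]
Intuitively these say that the two vectors $(\de_1,\dots,\de_n)$ and $(\si_1-z_1,\dots,\si_n-z_n)$ are parallel inside the field $R_\q$, which is exactly the proportionality we want to promote to a scalar.

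Next, since $\si\neq\id$ there is an index $i_0$ with $\si_{i_0}\neq z_{i_0}$, and because $R_\q$ is a field I would set $\kappa:=\de_{i_0}/(\si_{i_0}-z_{i_0})\in R_\q$. I then verify $\de_i=\kappa(\si_i-z_i)$ for every $i$ by splitting into two cases: if $\si_i\neq z_i$ the relation above (with $j=i$, pairing against $i_0$) forces $\de_i/(\si_i-z_i)=\kappa$ after dividing; if $\si_i=z_i$ the relation with the pair $(i,i_0)$ reads $\de_i(\si_{i_0}-z_{i_0})=0$, whence $\de_i=0$ and the identity holds trivially. The same relations make the definition of $\kappa$ independent of the chosen $i_0$.

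Finally I would upgrade this from generators to all of $R$. The map $h\mapsto\kappa(\si(h)-h)$ is itself a $\si$-derivation $R\to R_\q$: the assignment $h\mapsto\si(h)-h$ is a $\si$-derivation by a one-line Leibniz check, and multiplying by $\kappa$ preserves this because $\kappa$ is central in the commutative field $R_\q$. Thus $\de$ and $h\mapsto\kappa(\si(h)-h)$ are two $\si$-derivations into the $R$-bimodule $R_\q$ (with left action twisted by $\si$) agreeing on the algebra generators $z_1,\dots,z_n$; by induction on monomials via the Leibniz rule they agree on all of $R$, giving $\de(h)=\kappa(\si(h)-h)$. Uniqueness is immediate, since any competing $\kappa'$ would satisfy $(\kappa-\kappa')(\si_{i_0}-z_{i_0})=0$ in the integral domain $R_\q$, forcing $\kappa=\kappa'$.

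I expect the only genuine subtlety to be the derivation of the proportionality relations together with the careful bookkeeping of the indices $i$ for which $\si_i=z_i$; once the scalar $\kappa$ is correctly pinned down on generators, the extension to all of $R$ and the uniqueness are routine.
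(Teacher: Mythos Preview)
Your proof is correct and rests on the same key identity as the paper's: commutativity of $R$ together with the Leibniz rule yields $\de(f)(\si(g)-g)=\de(g)(\si(f)-f)$. The only difference is scope. You instantiate this relation at the generators $z_i$, pin down $\kappa$ there, and then perform a separate extension step by checking that $h\mapsto\kappa(\si(h)-h)$ is a $\si$-derivation agreeing with $\de$ on $z_1,\dots,z_n$. The paper instead observes that the identity holds for \emph{all} $f,g\in R$, so once one fixes any $f$ with $\si(f)\neq f$ the ratio $\de(f)/(\si(f)-f)$ is already seen to equal $\de(h)/(\si(h)-h)$ for every $h$ with $\si(h)\neq h$, while $\si(h)=h$ forces $\de(h)=0$; the formula $\de(h)=\kappa(\si(h)-h)$ then follows for all $h$ with no further extension argument. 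Your route is perfectly valid, just slightly longer; the paper's version buys you the removal of the generator-to-monomial induction.
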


\begin{proof}
	Since $R$ is commutative, by applying the Leibniz's rule to $\de(fg)$ and $\de(gf)$, we have
	\[
	\de(f)(\si(g)-g)=\de(g)(\si(f)-f)
	\]
	for all $f$, $g\in R$. Choose any $f$ such that $\si(f)\neq f$. If $\de(f)=0$, then $\de=0$, and we hence obtain $\kappa=0$. If $\de(f)\neq 0$, then $\de(g)\neq 0$ if and only if $\si(g)\neq g$. So the quotient $\de(f)/(\si(f)-f)$ is independent of the choice of $f$, which is the desired $\kappa$.
\end{proof}

Notice that the automorphism $\si$ on $R$ extends to $R_\q$ naturally, we write $\si_\q\in\Aut(R_\q)$ for the extension.

\begin{theorem}\label{thm:main-thm-general}
	Let $E=R[x;\si,\de]$ be an Ore extension with $\si\neq\id$, and $\kappa$ as in Lemma \ref{lem:skew-derivation}. Then the Nakayama automorphism $\nu$ of $E$ is given by $\nu|_R=\si^{-1}$ and
	\begin{equation}\label{eq:nak-general}
	\nu(x)=Jx+J\kappa-\si_\q^{-1}(\kappa).
	\end{equation}
\end{theorem}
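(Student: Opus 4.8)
The plan is to split the statement into the two assertions $\nu|_R=\si^{-1}$ and the formula for $\nu(x)$. The first is essentially free: since $R=\kk[z_1,\dots,z_n]$ is Calabi--Yau (that is, $\id$-twisted Calabi--Yau) of dimension $n$, Theorem \ref{thm:Ore-LWW} already gives $\nu|_R=\si^{-1}\nu_R=\si^{-1}$ and tells us $\nu(x)=\lam x+b$ for some $\lam,b\in R$ with $\lam$ invertible. All the work therefore goes into identifying $\lam=J$ and $b=J\kappa-\si_\q^{-1}(\kappa)$. The guiding idea is that Lemma \ref{lem:skew-derivation} presents $\de$ as an \emph{inner} $\si$-derivation once the coefficient $\kappa\in R_\q$ is allowed, so a change of variable should untwist $\de$ and reduce the general case to the trimmed computation of \S\ref{subsec:trimmed}. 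Indeed the target expression $J\kappa-\si_\q^{-1}(\kappa)$ is exactly the artefact such a substitution produces.

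Concretely, I would first pass to the quotient field. Because $\si$ is an automorphism, $S=R\setminus\{0\}$ is an Ore set of regular elements in the domain $E$, and the localization is the Ore extension $E_\q=R_\q[x;\si_\q,\de_\q]$, where $\si_\q$ and $\de_\q$ extend $\si$ and $\de$; by Lemma \ref{lem:skew-derivation} one has $\de_\q(h)=\kappa(\si_\q(h)-h)$ for all $h\in R_\q$. Setting $y=x+\kappa\in E_\q$ and using this identity, a one-line check gives $yh=\si_\q(h)y$, so $E_\q=R_\q[y;\si_\q]$ is a \emph{trimmed} Ore extension over $R_\q$.

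Next I would run the computation of \S\ref{subsec:trimmed} over $R_\q$ in the variable $y$. The Koszul resolution of $R$ and the lifting $\rho$ of Proposition \ref{prop:lifting-trimmed} are built from $\diffd z_i$, $\difl z_i=1\ot z_i-\si_i\ot1$ and the matrix $\JJ=|\De_j(\si_i)|$, all of whose entries already lie in $R\ot R$; since localization is flat and the resolution is finite and finitely generated, tensoring with $R_\q$ on both sides yields the analogous finite free $E_\q^e$-resolution of $E_\q$, and the reduction $y\ot1\equiv(1\ot y)\JJ\equiv1\ot Jy$ modulo the image goes through verbatim (the Jacobian of $\si_\q$ is again $J$). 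Hence $\nu_{E_\q}(y)=Jy$, whence $\nu_{E_\q}(x)=Jy-\si_\q^{-1}(\kappa)=Jx+J\kappa-\si_\q^{-1}(\kappa)$. Finally, because the finite free resolution of $E$ localizes to the very resolution of $E_\q$ just used, $\Ext^{n+1}_{E^e}(E,E^e)$ commutes with the localization, so the Nakayama automorphism of $E$ is the restriction of that of $E_\q$; thus $\nu(x)$ equals the displayed expression, and in particular $J\kappa-\si_\q^{-1}(\kappa)$ automatically lies in $R$, consistent with Theorem \ref{thm:Ore-LWW}.

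The main obstacle is exactly the passage through $R_\q$: one must justify that the explicit \S\ref{subsec:trimmed} argument, designed for the polynomial ring, stays correct over the field $R_\q$ (which is not finitely generated as a $\kk$-algebra), and that the Nakayama automorphism is genuinely compatible with the Ore localization $E\hookrightarrow E_\q$, i.e.\ $\nu=\nu_{E_\q}|_E$ on the nose and not merely up to an inner automorphism. Both points rest on flatness of localization together with finiteness of the resolution, but pinning down the identification so that the \emph{explicit} formula transfers (rather than an inner-equivalent one) is the delicate step. An alternative route that avoids $R_\q$ would be to write down a single lifting $\rho$ for the general $E$ combining Propositions \ref{prop:lifting-trimmed} and \ref{prop:lifting-diff}, but there the difficulty merely migrates to controlling the cross terms between the $\JJ$-part and the $\De_s(\de_{i_u})$-part when checking that $\rho$ is a chain map.
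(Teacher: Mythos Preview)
Your localization route is genuinely different from the paper's, and the obstacles you flag are real. The paper's argument is far shorter and sidesteps both of them.

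The point you do not exploit is the last clause of Theorem~\ref{thm:Ore-LWW}: $\lam$ depends only on $\si$, not on $\de$. Since \S\ref{subsec:trimmed} already gives $\lam=J$ when $\de=0$, one has $\nu(x)=Jx+b$ for \emph{every} $\de$, with only $b\in R$ unknown. The paper then simply applies the automorphism $\nu$ to the Ore relation $xf=\si(f)x+\de(f)$ for any $f$ with $\si(f)\neq f$: using $\nu|_R=\si^{-1}$ and expanding $(Jx+b)\si^{-1}(f)=f(Jx+b)+\si^{-1}\de(f)$ yields $J\de\si^{-1}(f)+b\si^{-1}(f)=fb+\si^{-1}\de(f)$, and dividing by $f-\si^{-1}(f)$ together with Lemma~\ref{lem:skew-derivation} gives $b=J\kappa-\si_\q^{-1}(\kappa)$. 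No new resolution is built, no passage to $E_\q$ is needed, and no compatibility of Nakayama automorphisms under localization is invoked.

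Your substitution $y=x+\kappa$ does illuminate the shape of the formula, and the paper itself uses exactly this change of variable later (see the discussion leading to Lemma~\ref{lem:inv-quotient}), so the idea is not wasted. But as a proof of the present theorem it forces you through the two issues you name, and the second is not innocuous: even granting that $E_\q$ is twisted Calabi--Yau via the localized resolution, the units of $E_\q=R_\q[y;\si_\q]$ are exactly $R_\q\setminus\{0\}$, and conjugation by $u\in R_\q^\times$ sends $y$ to $(u/\si_\q(u))\,y$, so the bimodule isomorphism alone determines $\nu_{E_\q}(y)$ only up to such a factor. Closing that gap would require either tracking the explicit generator of $\Ext^{n+1}$ through the localization, or invoking the very ``$\lam$ depends only on $\si$'' clause that makes the paper's three-line computation possible in the first place.
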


\begin{proof}
	We have proven the theorem in \S\ref{subsec:trimmed} for $\de=0$. When $\de\neq 0$, we know that $\nu(x)=Jx+b$ for an undetermined $b\in R$ by Theorem \ref{thm:Ore-LWW}. Let $f\in R$ be such that $\si(f)\neq f$. We then apply $\nu$ to the relation $xf=\si(f)x+\de(f)$, obtaining that $(Jx+b)\si^{-1}(f)=f(Jx+b)+\si^{-1}\de(f)$, or equivalently, $J\de\si^{-1}(f)+b\si^{-1}(f)=fb+\si^{-1}\de(f)$. Hence
    \begin{align*}
    b&=J\frac{\de\si^{-1}(f)}{f-\si^{-1}(f)}-\frac{\si^{-1}\de(f)}{f-\si^{-1}(f)}\\
    &=J\kappa-\si_\q^{-1}\biggl(\frac{\de(f)}{\si(f)-f}\biggr)\\
    &=J\kappa-\si_\q^{-1}(\kappa),
    \end{align*}
    and it follows that $\nu(x)=Jx+J\kappa-\si_\q^{-1}(\kappa)$, as desired.
\end{proof}

\begin{remark}
	As a consequence, we have
	\begin{equation*}
	\si_\q^r(\kappa)\equiv J^{-r}\kappa \pmod R
	\end{equation*}
	for all integers $r$. This equation is trivial if $\kappa \in R$, however, this is not obvious if $\kappa \notin  R$. The authors have not yet found a proof of it without using Nakayama automorphism.
\end{remark}

\begin{corollary}\label{cor:order}
	The orders of $\si$ and of $\nu$ are equal.
\end{corollary}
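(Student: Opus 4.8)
The plan is to prove the two divisibilities $\operatorname{ord}(\si)\mid\operatorname{ord}(\nu)$ and $\operatorname{ord}(\nu)\mid\operatorname{ord}(\si)$, where $\operatorname{ord}$ is the order in the relevant automorphism group, taken to be $\infty$ when no positive power is the identity. The first is immediate from $\nu|_R=\si^{-1}$: whenever $\nu^m=\id$ one has $\si^{-m}=(\nu|_R)^m=\id$, so $\operatorname{ord}(\si)\mid m$. In particular $\operatorname{ord}(\si)\mid\operatorname{ord}(\nu)$ when $\nu$ has finite order, and a finite order for $\nu$ always forces a finite order for $\si$.

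For the reverse divisibility I would assume $\si$ has finite order $r$ and set $b=J\kappa-\si_\q^{-1}(\kappa)\in R$, so that $\nu(x)=Jx+b$ by \eqref{eq:nak-general}. Two standard facts about $J$ are used. Since $\si\in\Aut(R)$ with $R=\kk[z_1,\dots,z_n]$, its Jacobian determinant is a unit of $R$, hence a nonzero scalar in $\kk$; thus $\si_\q$ fixes $J$ and $\nu(J)=J$. Moreover, multiplicativity of the Jacobian under composition together with $J(\id)=1$ yields $J^r=J(\si^r)=1$, so $J$ is an $r$-th root of unity. A routine induction on $m$, using $\nu|_R=\si^{-1}$, then gives
\[
\nu^m(x)=J^m x+\sum_{i=0}^{m-1}J^{\,m-1-i}\,\si_\q^{-i}(b),\qquad m\geq 1.
\]

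The crux is that the sum vanishes at $m=r$. Conceptually this is the statement that, after the change of variable $y=x+\kappa$ over $R_\q$, the inhomogeneous rule \eqref{eq:nak-general} becomes the pure scaling $\nu(y)=Jy$, so that $\nu^m(y)=J^m y$ and in particular $\nu^r(y)=y$; since $\si_\q^{-r}(\kappa)=\kappa$ this returns $\nu^r(x)=x$. Concretely, substituting $b=J\kappa-\si_\q^{-1}(\kappa)$ and using $\si_\q^{-i}(J)=J$ turns the sum at $m=r$ into a telescoping expression,
\[
\sum_{i=0}^{r-1}\Bigl(J^{\,r-i}\si_\q^{-i}(\kappa)-J^{\,r-1-i}\si_\q^{-i-1}(\kappa)\Bigr)=J^{r}\kappa-\si_\q^{-r}(\kappa)=\kappa-\kappa=0,
\]
the last equality using $J^r=1$ and $\si^r=\id$. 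Hence $\nu^r(x)=x$, and with $\nu^r|_R=\si^{-r}=\id$ this gives $\nu^r=\id$, whence $\operatorname{ord}(\nu)\mid r=\operatorname{ord}(\si)$.

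Combining the two divisibilities gives $\operatorname{ord}(\si)=\operatorname{ord}(\nu)$ when $\si$ has finite order; when $\si$ has infinite order, the first step (contrapositively) shows $\nu$ cannot have finite order, so the orders agree as $\infty$. The main obstacle is the identity $J^r=1$ and the ensuing cancellation: the whole argument rests on $J$ being a scalar $r$-th root of unity, which is precisely what lets the additive term $b$ in $\nu(x)$ telescope away with no hypothesis on $\cha\kk$.
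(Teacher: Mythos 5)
Your proof is correct and takes essentially the same route as the paper: both directions rest on $\nu|_R=\si^{-1}$ and on iterating \eqref{eq:nak-general} to obtain $\nu^r(x)=J^rx+J^r\kappa-\si_\q^{-r}(\kappa)$, which is exactly what your telescoped sum evaluates to. The only difference is that you spell out the facts the paper leaves implicit, namely that $J$ is a nonzero scalar (being a unit of $R$) and that $J^r=1$ whenever $\si^r=\id$.
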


\begin{proof}
	Denote by $o(\al)$ the order of an automorphism $\al$. Then $o(\si)\leq o(\nu)$ since $\nu|_R=\si^{-1}$. For $o(\si)\geq o(\nu)$, we may assume $o(\si)$ is finite, say $m$. It follows from \eqref{eq:nak-general} that $\nu^r(x)=J^rx+J^r\kappa-\si^{-r}(\kappa)$ for all integers $r$. Hence $\nu^m(x)=x$, forcing $o(\nu)\leq m$, as required.
\end{proof}

\section{Invariants under Nakayama automorphisms}\label{sec:inv}
In the present section, we suppose $\si\neq \id$, and let $\kappa$ be as in Lemma \ref{lem:skew-derivation}. Let $G$ be a cyclic group of order equal to that of $\si$, then by Corollary \ref{cor:order}, we have $G$-actions on $R$ and on $E$ naturally.

Our goal is to study the invariant $E^G=\{a\in E\mid \nu(a)=a\}$. For the purpose, let us construct a subsidiary Ore extension as follows. Since $\si_\q$ is an automorphism on $R_\q$, we define $\de_\q\colon R_\q\to R_\q$ by $\de_\q(c)=\kappa(\si_\q(c)-c)$ for all $c\in R_\q$. It is evident to check that $\de_\q$ is a $\si_\q$-derivation and $\de_\q|_R=\de$. So we have an Ore extension $E_\q=R_\q[x;\si_\q, \de_\q]$ that contains $E$ as a subalgebra.

The group $G$ acts on $E_\q$ via $\nu_\q$ where $\nu_\q\in\Aut(E_\q)$ is the unique extension of $\nu$. We will determine $E_\q^G$. Let $\mathfrak{S}^r_s(a_1, a_2, \dots, a_s)$ be the elementary symmetric polynomial of degree $r$ in variables $a_1, a_2, \dots, a_s$, where $1\leq r\leq s$. We adopt $\mathfrak{S}^0_s(a_1, a_2, \dots, a_s)=1$.

\begin{lemma}\label{lem:h-delta-power}
	For all $r\in\nan$, $(x+\kappa)^r=\sum\limits_{i=0}^{r} \mathfrak{S}_r^{r-i}(\kappa,\si_\q(\kappa),\ldots, \si^{r-1}_\q(\kappa))x^i$.
\end{lemma}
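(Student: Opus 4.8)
The plan is to induct on $r$, with the entire argument resting on one commutation identity: left multiplication by $x+\kappa$ twists coefficients exactly by $\si_\q$. First I would record that for any $c\in R_\q$ the defining relation of $E_\q$ together with $\de_\q(c)=\kappa(\si_\q(c)-c)$ gives
\[
(x+\kappa)c=\si_\q(c)x+\kappa\si_\q(c)-\kappa c+\kappa c=\si_\q(c)(x+\kappa).
\]
In particular $(x+\kappa)x^i=x^{i+1}+\kappa x^i$ carries no correction term, which is precisely what makes the powers computable in closed form. The base cases $r=0,1$ are then immediate from $\mathfrak{S}^0_s=1$ and $\mathfrak{S}^1_1(\kappa)=\kappa$.

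For the inductive step I would abbreviate $S^j_r:=\mathfrak{S}^j_r(\kappa,\si_\q(\kappa),\ldots,\si^{r-1}_\q(\kappa))$, assume $(x+\kappa)^r=\sum_{i=0}^r S^{r-i}_r x^i$, and expand $(x+\kappa)^{r+1}=(x+\kappa)\sum_{i=0}^r S^{r-i}_r x^i$. Applying the commutation identity to each summand turns it into $\si_\q(S^{r-i}_r)(x^{i+1}+\kappa x^i)$; using commutativity of $R_\q$ to write $\si_\q(S^{r-i}_r)\kappa=\kappa\,\si_\q(S^{r-i}_r)$ and collecting the coefficient of $x^j$ leaves
\[
\si_\q\bigl(S^{r+1-j}_r\bigr)+\kappa\,\si_\q\bigl(S^{r-j}_r\bigr).
\]

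It then remains to recognize this as $S^{r+1-j}_{r+1}$. Here I would use the elementary symmetric recursion $e_m(a_1,\ldots,a_{s+1})=e_m(a_1,\ldots,a_s)+a_{s+1}e_{m-1}(a_1,\ldots,a_s)$: the variable list for $r+1$ is $\kappa$ prepended to $\si_\q(\kappa),\ldots,\si^r_\q(\kappa)$, which is $\si_\q$ applied to the list for $r$, and since $\si_\q$ is an algebra automorphism it commutes with forming symmetric polynomials, so $\mathfrak{S}^m_r(\si_\q(\kappa),\ldots,\si^r_\q(\kappa))=\si_\q(S^m_r)$. This yields exactly $S^{r+1-j}_{r+1}=\si_\q(S^{r+1-j}_r)+\kappa\,\si_\q(S^{r-j}_r)$, completing the step. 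The only real obstacle is bookkeeping: I must align the index shift coming from $x^{i+1}$ against the second sum's $x^i$, and verify the two boundary coefficients $j=0$ and $j=r+1$ against the conventions $\mathfrak{S}^m_s=0$ for $m<0$ or $m>s$ together with $\mathfrak{S}^0_s=1$; care is needed only to keep the symmetric polynomials built on the specific ordered list so that $\kappa$ is attached in the correct position.
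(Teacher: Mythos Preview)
Your proof is correct and is essentially the paper's own argument: both induct on $r$, multiply on the left by $x+\kappa$, and reduce to the elementary symmetric recursion after observing that $\si_\q$ applied to the old variable list gives the tail of the new one. The only cosmetic difference is that you isolate the commutation identity $(x+\kappa)c=\si_\q(c)(x+\kappa)$ at the outset, whereas the paper unwinds the same computation inline by writing $x\cdot S=\si_\q(S)x+\de_\q(S)$ and then absorbing $\de_\q(S)+\kappa S=\kappa\,\si_\q(S)$; the resulting coefficient of $x^j$ and the appeal to $\mathfrak{S}^{m}_{r+1}=\mathfrak{S}^{m}_r(\si_\q(\kappa),\ldots)+\kappa\,\mathfrak{S}^{m-1}_r(\si_\q(\kappa),\ldots)$ are identical.
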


\begin{proof}
	Let us prove it by induction on $r$. When $r=0$, this is clearly true. Now let $r\geq 1$ and assume that the lemma holds true for $r-1$. We then have
	\begin{align*}
	(x+\kappa)^r&=(x+\kappa)\sum_{i=0}^{r-1} \mathfrak{S}_{r-1}^{r-1-i}(\kappa,\si_\q(\kappa),\ldots, \si_\q^{r-2}(\kappa))x^i \\
	&=\sum_{i=0}^{r-1} \si_\q \Bigl( \mathfrak{S}_{r-1}^{r-1-i}(\kappa,\si_\q(\kappa),\ldots, \si_\q^{r-2}(\kappa))\Bigr)x^{i+1} \\
	&\varphantom{=}{}+\sum_{i=0}^{r-1} \de_\q \Bigl( \mathfrak{S}_{r-1}^{r-1-i}(\kappa,\si_\q(\kappa),\ldots, \si_\q^{r-2}(\kappa))\Bigr)x^{i} \\
	&\varphantom{=}{}+\sum_{i=0}^{r-1} \kappa \mathfrak{S}_{r-1}^{r-1-i}(\kappa,\si_\q(\kappa),\ldots, \si_\q^{r-2}(\kappa)) x^{i} \\
	&=\sum_{i=0}^{r-1} \mathfrak{S}_{r-1}^{r-1-i}(\si_\q(\kappa),\si_\q^2(\kappa),\ldots, \si_\q^{r-1}(\kappa)) x^{i+1} \\
	&\varphantom{=}{}+\sum_{i=0}^{r-1} \kappa \si_\q\Bigl( \mathfrak{S}_{r-1}^{r-1-i}(\kappa,\si_\q(\kappa),\ldots, \si_\q^{r-2}(\kappa))\Bigr) x^{i} \\
	&=\sum_{i=1}^{r} \mathfrak{S}_{r-1}^{r-i}(\si_\q(\kappa),\si_\q^2(\kappa),\ldots, \si_\q^{r-1}(\kappa)) x^{i} \\
	&\varphantom{=}{}+\sum_{i=0}^{r-1} \kappa \mathfrak{S}_{r-1}^{r-1-i}(\si_\q(\kappa),\si_\q^2(\kappa),\ldots, \si_\q^{r-1}(\kappa)) x^{i} \\
	&=\sum_{i=0}^{r} \mathfrak{S}_{r}^{r-i}(\kappa, \si_\q(\kappa),\ldots, \si_\q^{r-1}(\kappa)) x^{i}, 
	\end{align*}
	namely, the lemma also holds true for $r$. 
\end{proof}

By virtue of Lemma \ref{lem:h-delta-power}, any element of $E_\q$ can be uniquely expressed as $\sum_ig_i(x+\kappa)^i$ for some $g_i\in R_\q$. Since \eqref{eq:nak-general} can be rewritten as $\nu_\q(x+\kappa)=J(x+\kappa)$, it is easy to verify that $\sum_ig_i(x+\kappa)^i\in E_\q^G$ if and only if $\si_\q(g_i)=J^ig_i$ for all $i$. For the sake of convenience, for any endomorphism $\al$ on a vector space $V$ and any $\lam\in\kk$, denote $\Lam_\lam^\al(V)=\{v \in V \mid \al(v)=\lam v \}$. We obtain

\begin{lemma}\label{lem:inv-quotient}
	For an element $c:=\sum_ig_i(x+\kappa)^i$ in $E_\q$, $c\in E_\q^G$ if and only if $g_i\in \Lam_{J^i}^{\si_\q}(R_\q)$ for all $i$. Moreover, $E_\q^G$ is an $\nan$-graded algebra with $(E_\q^G)_0=R_\q^G$.
\end{lemma}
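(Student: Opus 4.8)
The plan is to pass to the variable $y=x+\kappa$, which turns $E_\q$ into a \emph{trimmed} (hence graded) Ore extension, and then to read off the invariants as an eigenspace decomposition. First I would record the commutation rule for $y$: for every $c\in R_\q$, using $\de_\q(c)=\kappa(\si_\q(c)-c)$,
\[
yc=(x+\kappa)c=\si_\q(c)x+\de_\q(c)+\kappa c=\si_\q(c)x+\kappa\si_\q(c)=\si_\q(c)y .
\]
Together with Lemma \ref{lem:h-delta-power}, which guarantees that $\{y^i\}_{i\ge0}$ is a left $R_\q$-basis of $E_\q$, this exhibits $E_\q$ as the trimmed extension $R_\q[y;\si_\q]$. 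In particular $E_\q=\bigoplus_i R_\q\,y^i$ is an $\nan$-graded algebra with $\deg y=1$ and $\deg R_\q=0$, the grading being multiplicative because $(ay^i)(by^j)=a\,\si_\q^i(b)\,y^{i+j}$.

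The second ingredient is that the Jacobian determinant $J$ of $\si$ is a nonzero scalar: since $\si$ is an automorphism, its Jacobian matrix is invertible over $R=\kk[z_1,\dots,z_n]$, so $J$ is a unit of $R$, i.e.\ $J\in\kk^\times$; in particular $\si_\q(J)=J$ and $J$ is central. I would then rewrite \eqref{eq:nak-general} in the form $\nu_\q(y)=Jy$ (the very identity already noted before the lemma), whence $\nu_\q(y^i)=(Jy)^i=J^iy^i$. Since $\nu_\q|_{R_\q}=\si_\q^{-1}$ preserves $R_\q$ and $\nu_\q(R_\q y^i)=R_\q y^i$, the automorphism $\nu_\q$ is a \emph{graded} automorphism of $E_\q$.

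For the characterisation I would compute, for $c=\sum_i g_i y^i$,
\[
\nu_\q(c)=\sum_i\si_\q^{-1}(g_i)\,J^i y^i ,
\]
and compare coefficients with $c=\sum_i g_iy^i$, invoking the uniqueness in Lemma \ref{lem:h-delta-power}. This yields $c\in E_\q^G$ iff $\si_\q^{-1}(g_i)J^i=g_i$ for every $i$; applying $\si_\q$ and using $\si_\q(J)=J$ turns this into $\si_\q(g_i)=J^ig_i$, that is $g_i\in\Lam_{J^i}^{\si_\q}(R_\q)$, which is the first assertion. For the grading statement, since $\nu_\q$ is a graded automorphism the fixed subalgebra $E_\q^G$ is automatically a graded subalgebra of $E_\q$, with homogeneous components $(E_\q^G)_i=\Lam_{J^i}^{\si_\q}(R_\q)\,y^i$; the degree-zero component is $\Lam_1^{\si_\q}(R_\q)=\{g\in R_\q\mid\si_\q(g)=g\}=R_\q^G$, since $\nu_\q|_{R_\q}=\si_\q^{-1}$. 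If one prefers a hands-on check of closure, note that $\si_\q(a)=J^ia$ and $\si_\q(b)=J^jb$ force $\si_\q\bigl(a\,\si_\q^i(b)\bigr)=J^{i+j}a\,\si_\q^i(b)$, so $(ay^i)(by^j)$ again lies in the prescribed component.

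I expect the only genuinely substantive point to be the constancy of $J$: this is what collapses the a priori eigenvalue $J\si_\q(J)\cdots\si_\q^{i-1}(J)$ of $\nu_\q$ on $y^i$ down to $J^i$, and makes the homogeneous condition read cleanly as $\si_\q(g_i)=J^ig_i$. Everything else — the change of variable, which is a routine Ore-extension manipulation already licensed by Lemma \ref{lem:h-delta-power}, the coefficient comparison, and the general fact that the fixed ring of a graded automorphism is graded — is formal.
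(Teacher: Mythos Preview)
Your proof is correct and follows essentially the same line as the paper's: both use $\nu_\q(x+\kappa)=J(x+\kappa)$ to obtain the eigenspace decomposition $E_\q^G=\bigoplus_i \Lam_{J^i}^{\si_\q}(R_\q)(x+\kappa)^i$ and then verify multiplicative closure via $f(x+\kappa)^i\cdot g(x+\kappa)^j=f\si_\q^i(g)(x+\kappa)^{i+j}$ together with $\si_\q\bigl(f\si_\q^i(g)\bigr)=J^{i+j}f\si_\q^i(g)$. Your framing via the trimmed extension $E_\q\cong R_\q[y;\si_\q]$ and your explicit justification that $J\in\kk^\times$ (the Jacobian of an automorphism of $\kk[z_1,\dots,z_n]$ being a unit of $R$, hence a scalar) are welcome clarifications that the paper leaves implicit --- indeed this is exactly what collapses $\nu_\q(y^i)$ to $J^iy^i$ and makes the computation $\si_\q(J^i f)=J^i\si_\q(f)$ in the closure check go through.
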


\begin{proof}
	It is sufficient to prove the last assertion. We have 
	\[
	E_\q^G=\bigoplus_{i\in\nan} \Lam_{J^i}^{\si_\q}(R_\q)(x+\kappa)^i
	\]
	as vector spaces. For any $f\in \Lam_{J^i}^{\si_\q}(R_\q)$ and $g \in \Lam_{J^j}^{\si_\q}(R_\q)$,
	\[
	f(x+\kappa)^i\cdot g(x+\kappa)^j=f\si^i(g)(x+\kappa)^{i+j}.
	\]
	So $E_\q^G$ is graded by the fact $\si\bigl(f\si^i(g)\bigr)=J^if\si^i(J^jg)=J^{i+j}f\si^i(g)$. The equality $(E_\q^G)_0=R_\q^G$ is obvious.
\end{proof}

In order to study equivalence between categories of graded modules over graded algebras, J.J. Zhang introduced a notion of twist algebra \cite{Zhang:twist}. This notion is called Zhang's twist nowadays. Here, we are not going to repeat the original definition of Zhang's twist. Instead, we will introduce a special kind of Zhang's twist which is related with $E_\q^G$. Let $A$ be an $\nan$-graded algebra and $\tau$ a graded automorphism. Define a new multiplication $*_\tau$ on $A$ by $a*_\tau b=a\tau^{\deg a}(b)$ for all homogeneous $a$, $b$.  Then $*_\tau$, which turns out to be associative and preserve the grading, makes $A$ into a new graded algebra. The new algebra is called \textit{Zhang's twist} of $A$. We write $\tw(A,\tau)$ for it.

The external direct sum $\bigoplus_{i\in\nan} \Lam_{J^i}^{\si_\q}(R_\q)$ is a graded algebra via the multiplication on $R_\q$, and we denote it by $\widehat{R_\q}$. Since $\si_\q(\Lam_{J^i}^{\si_\q}(R_\q))=\Lam_{J^i}^{\si_\q}(R_\q)$, $\si_\q$ induces a graded automorphism on $\widehat{R_\q}$. The graded automorphism is denoted by $\widehat{\si_\q}$. By the proof of Lemma \ref{lem:inv-quotient}, we have $E_\q^G\cong\tw(\widehat{R_\q}, \widehat{\si_\q})$.

Next let us pay attention to $E^G$ as well as relations with $E_\q^G$. In a similar procedure, we obtain a graded algebra $\widehat{R}$ as well as a graded automorphism $\widehat{\si}$ induced by $\si$. The situation $\kappa\in R$ is very easy because $E^G$ is a graded subalgebra of $E_\q^G$. From Lemma \ref{lem:inv-quotient}, we conclude

\begin{proposition}
	Suppose that $\kappa\in R$. The invariant $E^G$ is a graded subalgebra of $E_\q^G$, more precisely, $E^G=\bigoplus_i\Lam_{J^i}^{\si}(R)(x+\kappa)^i$ with $(E^G)_0=R^G$. Equivalently, $E^G\cong\tw(\widehat{R}, \widehat{\si})$.
\end{proposition}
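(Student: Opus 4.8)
The plan is to reduce the statement to the computations already carried out over the quotient field $R_\q$, exploiting that the hypothesis $\kappa\in R$ places $x+\kappa$ inside $E$. First I would record a base-change observation. By Lemma~\ref{lem:h-delta-power}, $(x+\kappa)^r=\sum_{i=0}^r\mathfrak{S}_r^{r-i}(\kappa,\si_\q(\kappa),\ldots,\si_\q^{r-1}(\kappa))\,x^i$; since $\kappa\in R$ we have $\si^j(\kappa)\in R$ for every $j$, so each coefficient lies in $R$, and the leading one (at $i=r$) equals $1$. Thus the transition matrix from $\{x^i\}$ to $\{(x+\kappa)^i\}$ is unitriangular over $R$, whence $\{(x+\kappa)^i\}_{i\geq 0}$ is again a left $R$-basis of $E$. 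Consequently every $a\in E$ has a unique expression $a=\sum_i g_i(x+\kappa)^i$ with all $g_i\in R$, and these are necessarily its (unique) $R_\q$-coefficients as well.

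Next I would identify $E^G$ with $E\cap E_\q^G$: as $\nu_\q$ restricts to $\nu$ on $E$, for $a\in E$ one has $\nu(a)=a$ iff $\nu_\q(a)=a$. Feeding the expansion $a=\sum_i g_i(x+\kappa)^i$ into the criterion of Lemma~\ref{lem:inv-quotient} shows $a\in E_\q^G$ iff $g_i\in\Lam_{J^i}^{\si_\q}(R_\q)$ for all $i$. Intersecting with the constraint $g_i\in R$ and using $\si_\q|_R=\si$ together with $J\in R$ gives $g_i\in\Lam_{J^i}^{\si_\q}(R_\q)\cap R=\Lam_{J^i}^{\si}(R)$. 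Therefore
\[
E^G=\bigoplus_{i\in\nan}\Lam_{J^i}^{\si}(R)(x+\kappa)^i .
\]
This is a graded subspace of $E_\q^G$, and being the fixed locus of the automorphism $\nu_\q$ it is also a subalgebra; hence a graded subalgebra. Its degree-zero component is $\Lam_1^{\si}(R)=R^{\si}=R^G$.

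Finally, the isomorphism with Zhang's twist is a verbatim specialization of the argument giving $E_\q^G\cong\tw(\widehat{R_\q},\widehat{\si_\q})$. From $\de_\q(c)=\kappa(\si_\q(c)-c)$ one obtains the relation $(x+\kappa)c=\si(c)(x+\kappa)$ for $c\in R$, so that $f(x+\kappa)^i\cdot g(x+\kappa)^j=f\si^i(g)(x+\kappa)^{i+j}$. Since $\widehat{R}=\bigoplus_i\Lam_{J^i}^{\si}(R)$ and $\widehat{\si}$ restricts to $\si$ on each summand, the degree-preserving map $g(x+\kappa)^i\mapsto g$ intertwines this product with $f*_{\widehat{\si}}g=f\widehat{\si}^i(g)=f\si^i(g)$, yielding a graded algebra isomorphism $E^G\cong\tw(\widehat{R},\widehat{\si})$. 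I expect no genuine obstacle here: the single point demanding care is the unitriangular base change of the first paragraph, which is precisely where $\kappa\in R$ (rather than merely $\kappa\in R_\q$) is indispensable, since it is what keeps every coefficient $g_i$ in $R$ and thereby makes $E^G$ a graded subalgebra on the nose.
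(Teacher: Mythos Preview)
Your proposal is correct and follows the same route as the paper, which simply records the proposition as an immediate consequence of Lemma~\ref{lem:inv-quotient} without further argument. You have spelled out the one point the paper leaves implicit, namely that $\kappa\in R$ makes the transition from $\{x^i\}$ to $\{(x+\kappa)^i\}$ unitriangular over $R$, so that the $R_\q$-coefficients of any $a\in E$ already lie in $R$; this is exactly what turns Lemma~\ref{lem:inv-quotient} into the stated decomposition of $E^G$.
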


The situation $\kappa\notin  R$ is subtle. It is possible that $E^G$ is still a graded subalgebra of $E_\q^G$ or not. We give the following example to indicate the possibility.

\begin{example}
	Let $R=\kk[z_1,z_2]$ and $q\in\kk\setminus\{0,1\}$. Define $\si$ by $\si(z_1)=qz_1$, $\si(z_2)=z_2$, and define $\de$ by $\de(z_1)=q-1$, $\de(z_2)=0$. Then $\kappa=z_1^{-1}\notin R$ and $J=q$.
	\begin{enumerate}
		\item If $q$ is not a root of unity, then $\Lam_{J^i}^{\si}(R)$ is spanned by monomials $z_1^iz_2^j$ for all $j\in\nan$. So $\Lam_{J^i}^{\si}(R)(x+\kappa)^i\subset E^G\cap(E_\q^G)_i$, namely, $E^G$ is a graded subalgebra of $E_\q^G$. Moreover, $E^G\cong\tw(\widehat{R}, \widehat{\si})$.
		\item If $q$ is a primitive $r$th root of unity, then $x^r\in E^G$. However, if we express $x^r$ as the form $\sum_ig_i(x+\kappa)^i$, then  $g_r=1\in \Lam_{J^r}^{\si}(R)$. The fact that $(x+\kappa)^r\notin E$ forces $E^G$ not to be a graded subalgebra of $E_\q^G$.
	\end{enumerate}
\end{example}

Although $E^G$ is not a graded subalgebra of $E_\q^G$ in general, it admits a filtration from the latter. In fact, since $E^G\subset E_\q^G$, we equip $E^G$ with a filtration by $F_iE^G=E^G\cap (E_\q^G)_{\leq i}$. It follows that the associated graded algebra $\gr E^G$ is a graded subalgebra of $E_\q^G\cong\tw(\widehat{R_\q}, \widehat{\si_\q})$. Furthermore, for any $c=\sum_{j=0}^{i}g_j (x+\kappa)^j \in F_iE^G$, we have $g_i\in \Lam_{J^i}^{\si}(R)$. As a consequence, $\gr E^G$ is isomorphic to a graded subalgebra of $\tw(\widehat{R}, \widehat{\si})$. The fact is illustrated schematically:
\[
\xymatrix{
	\gr E^G \ar@{}[rd]|-{\displaystyle\circlearrowright}\ar[r]^-{\mathbf{j}}\ar@{^(->}[d] & \tw(\widehat{R}, \widehat{\si}) \ar@{^(->}[d] \\ E_\q^G \ar[r]^-{\cong} & \tw(\widehat{R_\q}, \widehat{\si_\q})
}
\]
where the map $\mathbf{j}$ is injective. It is natural to ask: Is $\mathbf{j}$ bijective? Or equivalently, do we have $\gr E^G\cong \tw(\widehat{R}, \widehat{\si})$?

The following example gives a negative answer to it.

\begin{example}
	Let $R=\kk[z_1,z_2]$ and $\cha\kk=0$. Define $\si$ by $\si(z_1)=z_1+z_2$, $\si(z_2)=z_2$, and define $\de$ by $\de(z_1)=z_1$, $\de(z_2)=0$. Then $\kappa=z_1z_2^{-1}$ and $J=1$. For all $i\in\nan$, $\tw(\widehat{R}, \widehat{\si})_i=\Lam^\si_1(R)=\kk[z_2]$. We claim that $1\in \tw(\widehat{R}, \widehat{\si})_1$ does not belong to $\im \mathbf{j}$. In fact, if yes, there would be $\vphi \in (E_\q^G)_0$ such that $x+z_1z_2^{-1}+\vphi \in E$. But this is impossible because $(E_\q^G)_0=R_\q^G=\kk(z_2)$.
\end{example}

Finally, let us close this section by a theorem showing when the isomorphism $\gr E^G\cong \tw(\widehat{R}, \widehat{\si})$ holds.

\begin{theorem}\label{thm:gr-invariant}
	Let $R$, $E$, $\nu$ be as above. Then
	\begin{enumerate}
		\item $\gr E^G$ is isomorphic to a graded subalgebra of $\tw(\widehat{R}, \widehat{\si})$,
		\item $\gr E^G$ is isomorphic to $\tw(\widehat{R}, \widehat{\si})$ if either $\nu(x)=Jx$ or $\si$ is of finite order $r$ with $\cha\kk\nmid r$.
	\end{enumerate}
\end{theorem}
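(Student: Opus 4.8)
The plan is to treat (1) as a bookkeeping consequence of the construction already carried out, and to reduce (2) to the surjectivity of the map $\mathbf{j}$. Part (1) needs nothing new: the filtration $F_iE^G=E^G\cap(E_\q^G)_{\le i}$ together with the injective, degree-preserving map $\mathbf{j}\colon\gr E^G\to\tw(\widehat R,\widehat\si)$ already realizes $\gr E^G$ as the graded subalgebra $\im\mathbf{j}$. For (2), since $\mathbf{j}$ is injective and graded, it is enough to show it is onto, i.e.\ that every $g_i\in\Lam_{J^i}^{\si}(R)=\tw(\widehat R,\widehat\si)_i$ occurs as the leading coefficient of some $c\in F_iE^G$.

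The single structural fact I would lean on is that $J\in\kk^\times$: the Jacobian determinant of an automorphism of $\kk[z_1,\dots,z_n]$ is a nonzero scalar (apply the chain rule to $\si\si^{-1}=\id$; this is already implicit in Corollary \ref{cor:order}, where $J^m=1$ is invoked). Because $J$ is central, $\nu(x)=Jx+b$ scales the top $x$-degree term by the scalar $J$, and inductively $\nu^k(x)=J^kx+b_k$ for some $b_k\in R_\q$. Feeding this into $\nu^k(g_ix^i)=\si^{-k}(g_i)\,\nu^k(x)^i$ and using $\si(g_i)=J^ig_i$, the leading $x$-coefficient is $\si^{-k}(g_i)J^{ki}=J^{-ki}g_i\,J^{ki}=g_i$ for every $k$. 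The last point I would record is that the target grading is by the variable $y=x+\kappa$, while my lifts are written in $x$; since $\kappa\in R_\q$ has $x$-degree $0$, the change of coordinate is degree-preserving, so for an element of $E$ the $y$-degree and leading $y$-coefficient coincide with its $x$-degree and leading $x$-coefficient. Thus producing an invariant element of $E$ of $x$-degree $i$ with leading $x$-coefficient $g_i$ is precisely what degree-$i$ surjectivity of $\mathbf{j}$ asks for.

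Granting this, the two cases fall out quickly. If $\nu(x)=Jx$ (that is, $b=0$), the computation above with $k=1$ gives $\nu(g_ix^i)=g_ix^i$, so $g_ix^i\in E^G$ is itself a lift with leading coefficient $g_i$. If instead $\si$ has finite order $r$ with $\cha\kk\nmid r$, then $\nu$ has order $r$ by Corollary \ref{cor:order} and $r$ is invertible in $\kk$, so the Reynolds operator $\pi=\tfrac1r\sum_{k=0}^{r-1}\nu^k\colon E\to E^G$ is a well-defined projection onto the invariants; applying it to $g_ix^i\in E$ produces $\pi(g_ix^i)\in E^G$ whose leading $x$-coefficient is $\tfrac1r\sum_{k=0}^{r-1}g_i=g_i$. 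Either way $\mathbf{j}$ hits $g_i$, giving surjectivity and hence the isomorphism.

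The conceptual heart---and the step I would guard most carefully---is the choice to lift $g_i$ by the $x$-monomial $g_ix^i$ rather than by the naive $y$-monomial $g_i(x+\kappa)^i$: the latter is visibly $G$-invariant but generally lies in $E_\q\setminus E$, which is exactly the obstruction displayed in the preceding example (where $b\neq0$ and $\si$ has infinite order). Both hypotheses in (2) are tailored to defeat this obstruction---one by making $g_ix^i$ already invariant, the other by averaging it into the invariants while preserving its leading term---so the crux is verifying that in each case the honest $x$-variable lift survives in $E$ with the prescribed leading coefficient.
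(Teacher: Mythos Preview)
Your argument is correct, and for part (1) and the first case of (2) it matches the paper verbatim. For the second case of (2) you and the paper use the same lift---the projection of $g_ix^i$ onto the $\nu$-fixed subspace---but verify its leading coefficient differently. The paper decomposes $g_ix^i=\sum_j p_j$ into $\nu$-eigenvectors, observes that the top $x$-coefficients $f_{l,j}$ land in distinct $\si$-eigenspaces of $R$, and argues by contradiction that $l=i$ and $f_{i,0}=g_i$. You instead compute the leading $x$-coefficient of each $\nu^k(g_ix^i)$ directly: using $J\in\kk^\times$ you get $(J^kx+b_k)^i=J^{ki}x^i+\text{lower}$, so the top coefficient is $\si^{-k}(g_i)J^{ki}=g_i$ for every $k$, and the Reynolds average preserves it. Your route is shorter and makes the role of the scalar $J$ explicit, while the paper's eigenspace bookkeeping avoids expanding $\nu^k(x)^i$ altogether; both yield the same invariant $p_0=\pi(g_ix^i)$ with leading term $g_i$.
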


\begin{proof}
	Assertion (1) has been proven before. Let us prove assertion (2) now. 
	
	The first case: $\nu(x)=Jx$. For any homogeneous $g_i\in \tw(\widehat{R}, \widehat{\si})_i$, we have $g_i\in \Lam_{J^i}^{\si}(R)$. It follows that $g_ix^i\in E^G$ and $g_ix^i=g_i(x+\kappa)^i+\text{lower terms} \in F_iE^G$, and we have $\gr E^G\cong \tw(\widehat{R}, \widehat{\si})$.
	
	The second case: $\si$ is of finite order $r$ with $\cha\kk\nmid r$. In this case, $\si$ is semisimple (i.e., diagonalizable). Choose any $r$th primitive root $\zeta$ of unity, and hence we have $R=\bigoplus_{j\in \inn/r\inn}\Lam^\si_{\zeta^j}(R)$. For the same reason, $E=\bigoplus_{j\in \inn/r\inn}\Lam^\nu_{\zeta^j}(E)$. Clearly, $J=\zeta^s$ for a unique $s \in \inn/r\inn$.
	
	For any homogeneous $g_i\in \tw(\widehat{R}, \widehat{\si})_i$, we decompose $g_ix^i$ as $\sum_{j\in \inn/r\inn}p_j$ with $p_j \in \Lam^\nu_{\zeta^j}(E)$. All $p_j$'s are expressed as polynomials in $x$ with coefficients in $R$. Denote by $l$ the highest degree of these polynomials. Namely,
	\[
	p_j=f_{l,j}x^{l}+f_{l-1,j}x^{l-1}+\cdots+f_{0,j}
	\]
	with $f_{\bullet,\bullet}\in R$ and at least one $f_{l,j}\neq 0$. Obviously, $l\geq i$. Since $p_j \in \Lam^\nu_{\zeta^j}(E)$, and
	\begin{align*}
	\nu(p_j)&=J^l\nu(f_{l,j})x^l+\text{lower terms}, \\
	\zeta^jp_j&=\zeta^j f_{l,j}x^l+\text{lower terms},
	\end{align*}
	we have $J^l\nu(f_{l,j})=\zeta^j f_{l,j}$. It follows that $\si(f_{l,j})=\zeta^{ls-j}f_{l,j}$, i.e., $f_{l,j}\in \Lam^\si_{\zeta^{ls-j}}(R)$. If $l>i$, then $0=\sum_j f_{l,j}\in \bigoplus_{j\in \inn/r\inn}\Lam^\si_{\zeta^{ls-j}}(R)=\bigoplus_{j\in \inn/r\inn}\Lam^\si_{\zeta^j}(R)$. Thus all $f_{l,j}$'s are equal to zero, a contradiction. So $l=i$, and together with the fact $g_i\in \Lam^\si_{is}(R)$, we conclude that $f_{i,0}=g_i$ and $f_{i,j}=0$ for all $1\leq j\leq r$. Therefore,
	\[
	p_0=f_{i,0}x^{i}+f_{i-1,0}x^{i-1}+\cdots+f_{0,0}=g_i(x+\kappa)^i+\text{lower terms}
	\] 
	which belongs to $F_iE^G$. Consequently, we have $\gr E^G\cong \tw(\widehat{R}, \widehat{\si})$.
\end{proof}

\section*{Acknowledgments}
The authors acknowledge the supports of the Natural Science Foundation of China No.\ 11501492 and No.\ 11711530703. L. L. would like to thank Can Zhu for giving a talk based on his joint paper \cite{Zhu-Oystaeyen-Zhang:double-ore-extension} with F.~Van Oystaeyen and Y.~Zhang.


\end{document}